\DeclareFontFamily{OT1}{rsfs}{}
\DeclareFontShape{OT1}{rsfs}{n}{it}{<-> rsfs10}{}
\DeclareMathAlphabet{\curly}{OT1}{rsfs}{n}{it}
\newtheorem{Thm}{Theorem}[section]
\newtheorem{Thm*}{Theorem}
\newtheorem{Conj*}{Conjecture}
\newtheorem{Prop}[Thm]{Proposition}
\newtheorem{Def}[Thm]{Definition}
\newtheorem{Def/Thm}[Thm]{Definition/Theorem}
\newtheorem{Lemma}[Thm]{Lemma}
\theoremstyle{definition}
\newtheorem{Rmk}[Thm]{Remark}
\newcommand{\ot }{\otimes}
\newcommand{\id}{{\operatorname{id}}}
\newcommand{\rank }{{\mathrm{rank}\,}}
\newcommand{\cA}{{\mathcal{A}}}
\newcommand{\cO}{{\mathcal{O}}}
\newcommand{\cE}{{\mathcal{E}}}
\newcommand{\cU}{{\mathcal{U}}}
\newcommand{\ccD}{{\mathcal{D}}}
\newcommand{\fC}{{\mathfrak{C}}}
\newcommand{\PP }{{\mathbb P}}
\newcommand{\EE }{{\mathbb E}}
\let\wt\widetilde
\newcommand{\lan}{\langle}
\newcommand{\ran}{\rangle}
\newcommand{\T}{\mathsf{T}}
\newcommand{\cB}{\mathcal{B}}
\renewcommand\;{\hspace{.7pt}}
\newcommand\C{\mathbb C}
\newcommand\Q{\mathbb Q}
\newcommand\Z{\mathbb Z}
\newcommand{\LL}{\mathbb{L}}
\renewcommand\t{\mathfrak t}
\renewcommand\({\big(}
\renewcommand\){\big)}
\newcommand{\so}{\ \ext@arrow 0359\Rightarrowfill@{}{\hspace{3mm}}\ }
\newcommand{\rt}[1]{\xrightarrow{\ #1\ }}
\newcommand\To{\longrightarrow}
\newcommand\into{\hookrightarrow}
\newcommand\INTO{\ \ar@{^(->}[r]<-.2ex>}
\newcommand\Mapsto{\ \longmapsto\ }
\newcommand\take{\smallsetminus}
\renewcommand\={\ =\ }
\DeclareMathSymbol{\lefttorightarrow}{3}{mathb}{"FC}
\DeclareMathSymbol{\righttoleftarrow}{3}{mathb}{"FD}
\newcommand\rk{\operatorname{rank}}
\newcommand\vir{\operatorname{vir}}
\newcommand\coker{\operatorname{coker}}
\newcommand\Ext{\operatorname{Ext}}
\newcommand\Spec{\operatorname{Spec}}
\newcommand\Cone{\operatorname{Cone}}
\newcommand\Sym{\operatorname{Sym}}
\newcommand\E{\overline{E}}
\newcommand\bcB{\overline{\cB}}
\newcommand\bN{\overline{N}}
\newcommand\bsigma{\overline{\sigma}}
\newcommand\btau{\overline{\tau}}
\newcommand\red{\mathrm{red}}
\newcommand\beq[1]{\begin{equation}\label{#1}}
\newcommand\eeq{\end{equation}}
\newcommand\beqa{\begin{eqnarray*}}
\newcommand\eeqa{\end{eqnarray*}}
\newcommand\arXiv[1]{\href{http://arxiv.org/abs/#1}{arXiv:#1}}
\newcommand\mathAG[1]{\href{http://arxiv.org/abs/math/#1}{math.AG/#1}}
\begin{document}

\title[Projective completions and quantum Lefschetz]{\ \vspace{-15mm}\\ Virtual cycles on projective completions and quantum Lefschetz formula\vspace{-3mm}}
\author[J. Oh]{Jeongseok Oh}


\begin{abstract}
For a compact quasi-smooth derived scheme $M$ with $(-1)$-shifted cotangent bundle $N$, there are at least two ways to localise the virtual cycle of $N$ to $M$ via torus and cosection localisations, introduced by Jiang-Thomas \cite{JT}. We produce virtual cycles on both the projective completion $\bN:=\PP(N\oplus\cO_M)$ and projectivisation $\PP(N)$ and show the ones on $\bN$ push down to Jiang-Thomas cycles and the one on $\PP(N)$ computes the difference. 

Using similar ideas we give an expression for the difference of the quintic and $t$-twisted quintic GW invariants of Guo-Janda-Ruan \cite{GJR}. 
\end{abstract}

\maketitle
\vspace{-6mm}
\setcounter{tocdepth}{1}

\section*{Introduction}
Let $N$ be a quasi-projective scheme equipped with the symmetric obstruction theory $\phi: \EE_N \to \LL_N$, where $\LL_N$ is the (truncated) cotangent complex of $N$. Then $\phi$ defines the degree zero virtual cycle
$$
[N]^{\vir}\ \in\ A_{0}\(N\).
$$
When $N$ is compact, we obtain an invariant $\deg [N]^{\vir} \in \Z$\footnote{It is Donaldson-Thomas invariant when $N$ is a moduli space of stable sheaves on Calabi-Yau $3$-fold.}. Even if $N$ is not compact, 
invariants can be defined via localisations\footnote{By a localisation of a class $x \in A_*\(X\)$ to a closed subscheme $i: Y \into X$, we mean a class $y \in A_*\(Y\)$ such that $i_* y = x$.} so long as it is acted on by a torus with a compact fixed locus. We review some localisations studied by Jiang-Thomas \cite{JT} 
in Appendix \ref{app:A}.

For a compact quasi-smooth derived scheme $M$ with its associated $(-1)$-shifted cotangent bundle $N$\footnote{For instance when $M$ is a moduli space of stable sheaves on a surface $S$, $N$ is the moduli space of stable sheaves on the canonical bundle $K_S$.}, $\T:=\C^*$ acts on $N$ fiberwise so that $M$ becomes its fixed locus. As a (classical) scheme $N$ is roughly the dual of obstruction sheaf over $M$. In this case, Jiang-Thomas show there are really only $2$ different localisations \cite{JT} -- torus and cosection localisations.

Letting $\EE_M\to \LL_M$ be the perfect obstruction theory of $M$ so that $N=\Spec\Sym(h^{1}(\EE^\vee_M))$, $t$ be the Euler class of the standard weight $1$ representation $\t$ of $\T$, and $\sigma: h^1(\EE^\vee_N)\cong\Omega_N\to\cO_N$ be the cosection induced by the Euler vector field, these two cycles are\footnote{The equivariant cycle $[M]^{\vir}/e_{\T}\(\EE_{M}[-1]\ot\t\)$ is then a polynomial in $t$ by degree reason. Since it is of degree zero, the coefficient of $t^i$ should be a degree $i$ Chow class.} 
\beq{virs}
[N]^{\vir}_{\T}\ :=\ \left\{\frac{[M]^{\vir}}{e_{\T}\(\EE_{M}[-1]\ot\t\)}\right\}_{t=0} \ \ \text{ and }\ \  [N]^{\vir}_{\sigma} \ \in\ A_0\(M\),
\eeq
where the latter is the cosection localised cycle \cite{KL}. Our main result is the difference of the two is given by the reduced cycle of the projectivisation $\PP(N)$. The composition $\EE_N|_{N\take M}\rt{\phi}\LL_{N\take M}\to\Omega_{(N\take M)/\PP(N)}$ defines a virtual rank $-1$ perfect obstruction theory of $\PP(N)$
$$
\EE_{\PP(N)}\ :=\ \Cone\(\,\EE_N|_{N\take M}\ \To\ \Omega_{(N\take M)/\PP(N)}\)[-1]
$$
in $D_{\C^*}\(N\take M\)\cong D\(\PP(N)\)$. The residue map $\Omega_{\PP(N\oplus\; \cO_M)}\(\log\PP(N)\)\(\PP(N)\)\to\cO_{\PP(N)}\(\PP(N)\)$ factors through
$$
\Omega_{\PP(N\oplus\; \cO_M)}\(\log\PP(N)\)\(\PP(N)\)\ \To\ \cO_{\PP(N\oplus\;\cO_M)}\(\PP(N)\)
$$
extending $\sigma:\Omega_N\to \cO_N$, whose restriction 
to $\PP(N)$ defines a surjective cosection\footnote{A way of thinking $\Omega_{\PP(N\oplus\; \cO_M)}\(\log\PP(N)\)\(\PP(N)\)|_{\PP(N)}$ is isomorphic to the obstruction bundle $h^1\(\EE^\vee_{\PP(N)}\)\cong h^1\(\EE^\vee_N|_{N\take M}\)\cong h^0\(\EE_N|_{N\take M}\)$ is to see them as a nontrivial element in $\Ext^1\(N_{\PP(N)/\PP(N\oplus\cO_M)},\Omega_{\PP(N)}\)\cong\Ext^1\(\Omega_{N\take M/\PP(N)},h^0\(\EE_{\PP(N)}\)\)$ which is $\C$.} 
$\EE_{\PP(N)}^\vee\to N_{\PP(N)/\PP(N\oplus\cO_M)}[-1]$\footnote{The cocone of the cosection pretends to be a virtual rank zero dual perfect obstruction theory, but it is not really a dual perfect obstruction theory. Nevertheless, these are enough data to produce a degree zero virtual cycle.}. Following Kiem-Li \cite{KL}, these two define a degree zero reduced virtual cycle $[\PP(N)]^{\red}$, see Definition \ref{def:redcycle} in Section \ref{sect:Red}. 
\begin{Thm*}\label{main1}
The difference of the virtual cycles \eqref{virs} is 
$$
[N]^{\vir}_{\T} \ -\ [N]^{\vir}_{\sigma} \ = \ p_*[\PP\(N\)]^{\red} \ \in \ A_0\(M\),
$$ 
where $p: \PP\(N\) \to M$ is the projection morphism.
\end{Thm*}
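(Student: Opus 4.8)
\medskip\noindent\emph{Proof plan.}
The strategy is to realise all three classes on the compact space $\bN$ and then push forward along the projection $\bar p\colon\bN\to M$, which restricts to $\id_M$ on the zero section $M\into\bN$ and to $p$ on $\PP(N)\into\bN$. I would use the $\T$-equivariant, virtual-dimension-zero perfect obstruction theory $\EE_{\bN}$ on $\bN$ constructed earlier: it restricts to $\EE_N$ under the open immersion $N\cong\bN\take\PP(N)\into\bN$, and its $\T$-fixed-part obstruction theories along the two components $M$ and $\PP(N)$ of $\bN^{\T}$ are $\EE_M$ and $\EE_{\PP(N)}$. Since $\bN$ is proper, $[\bN]^{\vir}\in A_0(\bN)$ is unambiguous, and I would compute $\bar p_*[\bN]^{\vir}$ in two different ways.

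First, apply the virtual torus localisation formula of Graber--Pandharipande to $[\bN]^{\vir}$. Near the zero section the pair $(\bN,\EE_{\bN})$ is identified with $(N,\EE_N)$, so the fixed and moving parts of $\EE_{\bN}|_M$ are exactly those entering \eqref{virs}, and the $M$-contribution to the formula is $\iota_{M*}\{[M]^{\vir}/e_{\T}(\EE_M[-1]\ot\t)\}_{t=0}=\iota_{M*}[N]^{\vir}_{\T}$. The $\PP(N)$-contribution vanishes: its fixed-part obstruction theory is $\EE_{\PP(N)}$, of virtual dimension $-1$, so its virtual class lies in $A_{-1}(\PP(N))=0$. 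Hence $[\bN]^{\vir}=\iota_{M*}[N]^{\vir}_{\T}$ in $A_0(\bN)$, and $\bar p_*[\bN]^{\vir}=[N]^{\vir}_{\T}$.

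Second, use the $\T$-equivariant cosection $\bar\sigma$ of $h^1(\EE_{\bN}^{\vee})$ induced by the Euler vector field of the $\T$-action, so that $\bar\sigma|_N=\sigma$, its degeneracy locus is $M\sqcup\PP(N)$, and its residue along $\PP(N)$ is the surjective cosection $\EE_{\PP(N)}^{\vee}\to N_{\PP(N)/\bN}[-1]$ described above. Kiem--Li cosection localisation \cite{KL} then produces $[\bN]^{\vir}_{\bar\sigma}\in A_0(M)\oplus A_0(\PP(N))$ with $\iota_*[\bN]^{\vir}_{\bar\sigma}=[\bN]^{\vir}$; as the construction is local around the degeneracy locus and $(\bN,\EE_{\bN},\bar\sigma)$ agrees with $(N,\EE_N,\sigma)$ near $M$, the $M$-component is $[N]^{\vir}_{\sigma}$. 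Denoting by $W\in A_0(\PP(N))$ the $\PP(N)$-component and combining with the first computation, $\iota_{M*}[N]^{\vir}_{\sigma}+\iota_{\PP(N)*}W=[\bN]^{\vir}=\iota_{M*}[N]^{\vir}_{\T}$; applying $\bar p_*$ gives $[N]^{\vir}_{\T}-[N]^{\vir}_{\sigma}=p_*W$.

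It remains to prove $W=[\PP(N)]^{\red}$, which I expect to be the main obstacle. The class $W$ is determined by the first-order behaviour of $(\EE_{\bN},\bar\sigma)$ along $\PP(N)$: there $\EE_{\bN}$ has fixed part $\EE_{\PP(N)}$ and rank-one moving part $N^{\vee}_{\PP(N)/\bN}$, and $\bar\sigma$ vanishes to first order with linearisation the surjection $h^1(\EE_{\PP(N)}^{\vee})\to N_{\PP(N)/\bN}$. In this local model I would unwind Kiem--Li's construction of the cosection-localised cycle and compare it term by term with their construction of the reduced cycle in Definition \ref{def:redcycle}: in both one cuts the obstruction cone down by the rank-one image of the cosection, leaving the reduced obstruction sheaf $\ker\big(h^1(\EE_{\PP(N)}^{\vee})\to N_{\PP(N)/\bN}\big)$, which should yield $W=[\PP(N)]^{\red}$; substituting then proves the theorem. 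The delicate ingredients are this local comparison, carried out with full control of the $\T$-weights and of the residue, together with the background facts invoked above — the construction of $\EE_{\bN}$ with the stated restrictions along $M$ and $\PP(N)$, the verification that $\bar\sigma$ is genuinely a cosection of $h^1(\EE_{\bN}^{\vee})$, and the vanishing of the $\PP(N)$-contribution in the torus localisation.
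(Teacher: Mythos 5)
Your plan is the paper's strategy: realise everything on $\bN$, compute $\pi_*[\bN]^{\vir}$ once by torus localisation and once by a cosection argument, and identify the leftover term on $D=\PP(N)$ with the reduced cycle. The torus-localisation half is exactly Proposition \ref{Prop:log}: the contribution of $M$ is $[N]^{\vir}_{\T}$ because $\EE_{\bN/M}|_M\cong\EE_{N/M}|_M$ is precisely the moving part there, and the contribution of $D$ vanishes because the fixed part of $\EE_{\bN}|_D$ has virtual rank $-1$. No issues with that half.

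The cosection half is where your plan, as written, would not go through. The extension of $\sigma$ to $\bN$ is not an $\cO_{\bN}$-valued cosection with degeneracy locus $M\sqcup D$: the Euler pairing extends only after twisting, to $\bsigma:\EE_{\bN}^\vee\to\cO_{\bN}(D)[-1]\ot\t$, and this twisted cosection is \emph{surjective} along $D$ -- its degeneracy locus is just $M$. So Kiem--Li does not hand you a class in $A_0(M)\oplus A_0(D)$ off the shelf; the term supported on $D$ arises from the twist by $\cO_{\bN}(D)$, not from degeneracy. The paper's substitute is concrete: choose a global resolution $\{E^*\to T^*\}$ of $\EE_{\bN}$, blow up $M$ (with $b:Bl\to\bN$ and exceptional divisor $D_0$), use cone reduction to split $[C_{\bN}]=[C_1]+[C_2]$ with $C_1$ over $M$ and $C_2$ inside $F=\ker(\bsigma_{Bl})\subset E|_{Bl}$, and check that $0^!_E[C_{\bN}]$ is the pushforward of $0^!_{E|_M}[C_1]+b_*\((D-D_0)\cap 0^!_F[C_2]\)$; the two terms involving $M$ and $D_0$ assemble, by the very definition of cosection localisation, into $[N]^{\vir}_{\sigma}$, and the $D$-term is the new contribution. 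Second, identifying that $D$-term with $[\PP(N)]^{\red}$ is not a formal term-by-term match of two Kiem--Li constructions: the reduced cycle is built from the intrinsic normal cone of $D$, whereas the $D$-term involves the cone of $\bN$ restricted near $D$. The bridge is the geometric claim \eqref{CLAIM} that the Behrend--Fantechi cone of $D$ (for the fixed-part obstruction theory $\EE_{\bN}|_D^{\mathrm{fix}}$) equals the restriction to the zero section of the Behrend--Fantechi cone of $N_{D/\bN}$, verified on local smooth embeddings. Without some version of this cone identity your final step has no content, so this is the piece you must actually supply.
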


\smallskip
We apply these ideas to the quintic and $t$-twisted quintic quasimap/GW invariants of Guo-Janda-Ruan \cite{GJR}\footnote{Felix Janda informed me that this is commonly called the $\cO_{\PP^4}(5)$-twisted invariants. It is different from the formal invariants of Lho-Pandharipande \cite{LP}.} 
Although Theorem \ref{main1} does not apply directly in this case, very similar techniques compute the difference between these invariants.
%

We denote by $\iota:Q^{\;\varepsilon}_g(X,d)\into Q^{\;\varepsilon}_g(\PP^4,d)$ the moduli spaces of degree $d$, $\varepsilon$-stable quasimaps with genus $g$, no marked points to a smooth quintic $3$-fold $X$ and $\PP^4$, $X\subset \PP^4$ \cite{CKM}. Then the quasimap invariant for $X$ is defined to be the degree of the virtual cycle
$$
[Q^{\;\varepsilon}_g(X,d)]^{\vir}\ \in\ A_0(Q^{\;\varepsilon}_g(X,d)).
$$
When $\varepsilon>2$, the spaces are moduli of stable maps, so the degree defines Gromov-Witten invariant. 

Consider the universal curve and map
\beq{universal}
\xymatrix@R=6mm{
C\ar[r]^-f\ar[d]_-\pi & \PP^4\\
Q^{\;\varepsilon}_g(\PP^4,d).
}
\eeq
In $g=0$, $Q^{\;\varepsilon}_0(\PP^4,d)$ is smooth, $R\pi_*f^*\cO(5)=\pi_*f^*\cO(5)$ is a vector bundle, and the section $\pi_*f^*s$ of $\pi_*f^*\cO(5)$ cuts out $Q^{\;\varepsilon}_0(X,d)\subset Q^{\;\varepsilon}_0(\PP^4,d)$, where $s\in \Gamma(\cO(5))$ is the defining section of $X=Z(s)\subset \PP^4$. It allows us to have the equivalence \cite{KKP} 
$$
\iota_*[Q^{\;\varepsilon}_0(X,d)]^{\vir}\= e(\pi_*f^*\cO(5)) \cap [Q^{\;\varepsilon}_0(\PP^4,d)].
$$
This motivates considering the cycles in $g>0$,
\beq{QLPclass}
\{e_{\T}\(R\pi_*f^*\cO(5)\ot \t^{-1}\)\cap [Q^{\;\varepsilon}_g(\PP^4,d)]^{\vir}\}_{t=0}\ \in\ A_0(Q^{\;\varepsilon}_g(\PP^4,d))
\eeq
even if $Q^{\;\varepsilon}_g(\PP^4,d)$ may not be smooth and $R^1\pi_*f^*\cO(5)$ need not vanish. 
Its degree is called the $t$-twisted invariant for $X$, so it is $t$-twisted Gromov-Witten invariant when $\varepsilon>2$. 

\begin{Thm*}\label{thm2}
The reduced cycle of the projectivisation of
$$
N\ :=\ \text{the moduli space of stable quasimap to $\PP^4$ with $p$-fields}
$$
gives the difference up to sign
\begin{align*}
\{e_{\T}\(R\pi_*f^*\cO(5)\ot\t^{-1}\) & \cap [Q^{\;\varepsilon}_g(\PP^4,d)]^{\vir}\}_{t=0} - \iota_*[Q^{\;\varepsilon}_g(X,d)]^{\vir} \\
&=(-1)^{5d+1-g}p_*[\PP(N)]^{\red}\ \in\ A_0\(Q^{\;\varepsilon}_g(\PP^4,d)\),
\end{align*}
where $p:\PP(N)\to Q^{\;\varepsilon}_g(\PP^4,d)$ is the projection morphism.
\end{Thm*}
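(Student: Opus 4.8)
The plan is to mimic the proof of Theorem \ref{main1}, replacing the abstract $(-1)$-shifted cotangent bundle $N$ by the concrete moduli space of stable quasimaps to $\PP^4$ with $p$-fields, and to track the signs that appear because the relevant obstruction theory is no longer exactly symmetric. First I would recall the cosection-localised $p$-field construction: the moduli space $N$ of quasimaps with $p$-fields carries a cosection $\sigma$ built from the quintic $s\in\Gamma(\cO(5))$, and Chang--Li's $p$-field comparison (in the quasimap-stable setting, Chang--Li--Li / Kim--Oh) gives the identity $[N]^{\vir}_{\sigma}=(-1)^{5d+1-g}\,\iota_*[Q^{\;\varepsilon}_g(X,d)]^{\vir}$ in $A_0(Q^{\;\varepsilon}_g(\PP^4,d))$, where the sign is exactly the one appearing in the statement. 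So the content of the theorem is that the \emph{torus}-localised class of $N$ — with $\T=\C^*$ scaling the $p$-field fibres — equals the $t$-twisted class \eqref{QLPclass}, up to the same sign, and that the two localisations of $[N]^{\vir}$ differ by $(-1)^{5d+1-g}p_*[\PP(N)]^{\red}$.

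The key steps, in order: (1) Identify $N$ as the total space of a (dual) cone over $Q^{\;\varepsilon}_g(\PP^4,d)$ — concretely, the $p$-fields are sections of $\cO(-5)\ot\ko_\pi$ along the fibres, so $N$ sits inside $\Spec\Sym$ of (a locally free resolution of) $R\pi_*f^*\cO(5)\ot\t^{-1}$; this is where the bundle $R\pi_*f^*\cO(5)$ and the twist $\t^{-1}$ in \eqref{QLPclass} enter. (2) Run the torus-localisation computation exactly as for $[N]^{\vir}_{\T}$ in Appendix \ref{app:A}: the virtual normal bundle of the fixed locus $Q^{\;\varepsilon}_g(\PP^4,d)$ inside $N$ contributes $e_{\T}(R\pi_*f^*\cO(5)\ot\t^{-1})^{-1}$, and taking the $t=0$ part of $[Q^{\;\varepsilon}_g(\PP^4,d)]^{\vir}$ divided by this Euler class — which is a polynomial in $t$ by the degree argument in the footnote to \eqref{virs} — yields \eqref{QLPclass}; I would need to confirm the obstruction theory of $N$ restricted to the fixed locus splits as the obstruction theory of $Q^{\;\varepsilon}_g(\PP^4,d)$ plus the moving part $R\pi_*f^*\cO(5)\ot\t^{-1}$, so that $[N]^{\vir}_{\T}$ equals $(-1)^{5d+1-g}$ times \eqref{QLPclass}. (3) Construct the reduced cycle $[\PP(N)]^{\red}$ via Definition \ref{def:redcycle}: form $\bN=\PP(N\op\cO)$ and $\PP(N)$, extend $\sigma$ to the logarithmic residue cosection on $\bN$ along the divisor $\PP(N)$ as in the Introduction, restrict to get the surjective cosection on $\PP(N)$, and build the rank $-1$ obstruction theory $\EE_{\PP(N)}$. (4) Compare the three cycles on $\bN$: show the $\T$-localised and $\sigma$-localised virtual cycles of $\bN$ both push forward to the corresponding cycles on $M:=Q^{\;\varepsilon}_g(\PP^4,d)$ (using that $\bN\to N$ or rather the zero-section/infinity-section geometry recovers $[N]^{\vir}_{\T}$ and $[N]^{\vir}_{\sigma}$), and that their difference is supported on $\PP(N)$ and equals $p_*[\PP(N)]^{\red}$ — this is the localisation-comparison step, carried out by degeneration to the normal cone / deformation of the cosection, identical in structure to the proof of Theorem \ref{main1}. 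Combining (2), the $p$-field identity in the opening paragraph, and (4), and factoring out the common sign $(-1)^{5d+1-g}$, gives the stated formula.

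The main obstacle is step (2) together with the sign bookkeeping: Theorem \ref{main1} is stated for the \emph{exactly} symmetric obstruction theory of a $(-1)$-shifted cotangent bundle, whereas the $p$-field moduli space $N$ only has a symmetric-up-to-twist obstruction theory (the duality pairs $R\pi_*f^*T_{\PP^4}$-type terms with $R\pi_*f^*\cO(5)$-type terms rather than with themselves), so I cannot invoke Theorem \ref{main1} verbatim — this is why the excerpt says it ``does not apply directly.'' I expect to need a careful analysis showing the localisation machinery of Appendix \ref{app:A} still applies: that the fixed locus is still $M$ with the expected splitting of obstruction theories, that the Euler-class denominator is genuinely invertible after the $\{\,\cdot\,\}_{t=0}$ operation, and that the cosection $\sigma$ on $N$ is still surjective away from $M$ with zero locus exactly $M=Z(s)$-pullback, so that the reduced-cycle construction of Section \ref{sect:Red} goes through. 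A secondary technical point is verifying that the extension of $\sigma$ to the residue cosection on $\bN=\PP(N\op\cO)$ behaves identically in the twisted setting — this should be formal since the twist by $\t^{-1}$ is pulled back from a point and does not interact with the projective-completion geometry — and that the final sign is $(-1)^{5d+1-g}$ and not its negative, which I would pin down by testing against the known $g=0$ case where both sides vanish and against the quintic-threefold $p$-field sign convention of Chang--Li.
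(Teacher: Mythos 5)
Your overall route is the paper's: identify $[N]^{\vir}_{\T}$ with $(-1)^{5d+1-g}$ times the twisted class (the paper's \eqref{TloCalN}), quote the $p$-fields comparison $[N]^{\vir}_{\sigma}=(-1)^{5d+1-g}\iota_*[Q^{\;\varepsilon}_g(X,d)]^{\vir}$ (the paper's \eqref{CoSecN}, citing the Chang--Li/Kim--Oh/Chen--Janda--Webb/Picciotto results), and then run the projective-completion comparison of Theorem \ref{main2} in the relative set-up of Section \ref{sect:SetUp} (over the Artin stack $S$ of prestable curves with line bundles, with the cosection built from the chain map \eqref{ChainAB} induced by the quintic equation rather than from an Euler vector field). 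Your steps (1), (2) and (4), and your sign bookkeeping via the rank $5d+1-g$ of $R\pi_*f^*\cO(5)$, match the paper.

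The gap is in your step (3). For $[\PP(N)]^{\red}$ to be defined at all (Definition \ref{def:redcycle}) one must prove that $h^1(\bsigma)$ is surjective on the divisor at infinity $D=\PP(N)$; you assert this (``restrict to get the surjective cosection on $\PP(N)$''), and the justification you offer --- that $\sigma$ is surjective on $N\take M$ with degeneracy locus $M$ --- does not give it. Surjectivity on the interior $N\take M$ says nothing about the boundary behaviour of the extension: $\bsigma|_D$ is the composition $B'|_D\to B|_D\to\cO_D(D)\ot\t$, and while the second arrow is surjective along $D$, the composition can degenerate in the limit at infinity. This is exactly what the paper's Lemma \ref{HuHuHu} addresses: $h^1(\bsigma)$ is surjective on $D$ iff $h^0\(\Cone\eqref{ChainAB}\)=0$, i.e.\ iff $h^0\(\EE^\vee_{M/S}[1]\)\to h^0\(\EE_{N/M}|_M\)$ is onto, which for quasimaps is the map induced by the differential of the defining section $s$ and is onto precisely because the quintic $X$ is smooth. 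In the setting of Theorem \ref{main1} this is automatic (the chain map is the identity), which is why Theorem \ref{thm2} requires this extra verification; your proposal never brings in the smoothness of $X$, which is the one genuinely new geometric input beyond Theorem \ref{main2}.
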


\begin{Rmk}
We prove Theorems \ref{main1} and \ref{thm2} using the projective completion $\bN:=\PP(N\oplus \cO_M)$.
In \cite[Theorem 1.11=Theorem 3.21]{CJR}, Chen-Janda-Ruan proved a similar comparison result to Theorem \ref{thm2} using a different compactification of $N$. Recall the $p$-field space $N$ is the moduli space of stable objects of $(C,L,u,p)$, where
\begin{align*}
&\text{$C$ is a genus $g$ curve$\;$,}\ \text{ $L$ is a degree $d$ line bundle on $C$}, \\
&u\in \Gamma(C,L^{\oplus 5}),\ \ p\in\Gamma(C, L^{\otimes -5}\otimes\omega_C).
\end{align*} 
Their compactification $\bN_{\mathrm{CJR}}$ considers 
$$
\Gamma\(C,\PP(L^{\otimes -5}\otimes \omega_C\oplus\cO_C)\)\ \text{ instead of }\ \Gamma\(C, L^{\otimes -5}\otimes\omega_C\),
$$
together with some extra structures and conditions.
Then they constructed the canonical and reduced perfect obstruction theories of $\bN_{\mathrm{CJR}}$, where the latter gives the actual quintic theory and the former gives a different theory containing the $t$-twisted theory as a part of the torus localisation contributions. Their canonical perfect obstruction theory is equipped with the meromorphic cosection, inducing the regular cosection of the reduced perfect obstruction theory. It is surjective along the boundary, localising the reduced virtual cycle to the actual quintic cycle. However for us, we don't construct a reduced perfect obstruction theory on $\bN$, instead we use the meromorphic cosection of the canonical perfect obstruction theory to produce reduced virtual cycles in the sense of Kiem-Li \cite{KL}. 
\end{Rmk}

\smallskip
\subsection*{Acknowledgements} 
For more than a half year Richard Thomas had advised me the crucial idea, right direction and shaping the paper at our weekly meeting. I truly appreciate it. 

I should acknowledge an important contribution of Felix Janda and Yunfeng Jiang to this project. I am grateful to them for kindly sharing their note on projective completions and comments of finalising the paper. 

Honglu Fan suggested me that the quantum Lefschetz formula might be obtained by Theorem \ref{main1}. A regular online meeting with J\'er\'emy Gu\'er\'e and Mark Shoemaker was helpful to formulate Lemma \ref{HuHuHu}. I also thank Hyenho Lho, Evgeny Shinder, Bhamidi Sreedhar for useful comments.

In $2018$ after my advisor Bumsig Kim (1968-2021) -- who passed away during the preparation of this paper -- and I finished our paper \cite{KO}, we have talked if we can get an analogue of Theorem \ref{thm2}. He always encouraged me to grow up mathematically and personally. It was an honour and a privilege to be his student.

\subsection*{Notation}
For a morphism of schemes $f: X \to Y$ and a perfect complex $\cE$ on $Y$, we often denote by $\cE|_X$ the pullback $f^*\cE$. We use $\cE^*$ for the usual dual of $\cE$, whereas $\cE^\vee$ for the derived dual.

A vector bundle $E$ is sometimes thought of as its total space. For a morphism of vector bundles $f:E\to F$, $\ker f$ is sometimes thought of as the space $\Spec\(\Sym\(\coker f^*\)\)$.

\tableofcontents
\vspace{-1cm}

\section{Quasi-smooth derived schemes and stable quasimaps}
In this section, we review quasi-smooth derived schemes and moduli spaces of stable quasimaps. Though these two are not very close subjects, there are some common features to make the proofs of Theorem \ref{main1} and \ref{thm2} are equivalent. So after reviewing the two subjects, we summarise these in Section \ref{sect:SetUp} to establish the set-up for one proof.

\subsection*{Quasi-smooth derived scheme}
Let $M$ be a quasi-projective scheme with the perfect obstruction theory 
\beq{EM}
\EE_M=\{\,B^* \rt{d} A^*\,\}\ \To\ \LL_M,
\eeq
and $N := \Spec_{\cO_M} \Big( \Sym \(\text{coker} d^*\) \Big)$ be the abelian cone of the dual obstruction sheaf of $M$. 
By \cite[Lemma 2.1]{JT},
\beq{EMvee}
\EE_M^\vee|_N[1]=\{\,A|_N\rt{d^*}B|_N\,\}\ \To \ \LL_{N/M}
\eeq
is a perfect obstruction theory of $N$ relative to $M$. 
The relative perfect obstruction theory \eqref{EMvee} is indeed obtained by a cut-out model
\beq{localN}
\xymatrix@=18pt{
& A^*|_{B^*} \ar[d] \\  
 N\ =\ (d\circ\tau_{B^*})^{-1}(0)\ \subset\hspace{-8mm} & B^*,\ar@/^{-2ex}/[u]_{d\circ \tau_{B^*}}}
\eeq
where $\tau_{B^*}$ is the tautological section $\tau_{B^*} : \cO_{B^*} \to B^*|_{B^*}$.

Suppose moreover that $M$ is a cut-out of a smooth scheme $\cA$ by a section $s$ of $B$ so that \eqref{EM} is given by $d=(ds)^*|_M$. Here we regard $B$ as an extended bundle on $\cA$. Then $N$ is a critical locus $N=Z(df)$ of the pairing
$$
f\ :=\ \lan s|_{B^*}\, ,\, \tau_{B^*}\ran\ \in\ \Gamma\(\cO_{B^*}\).
$$
Hence we obtain a symmetric obstruction theory $\EE_N$ of $N$,
\beq{EEN}
\EE_N\ :=\ \{\, T_{B^*} \ \rt{d^2f}\ \Omega_{B^*}\,\}|_{N}. 
\eeq
Together with \eqref{EM}, \eqref{EMvee} it fits in the exact triangle $\EE_M|_N \to \EE_N \to \EE^\vee_M|_N[1]$. 

Though $M$ may not be obtained by a global cut-out model, $N$ is equipped with the canonical symmetric obstruction theory $\EE_N$ fit in the exact triangle if $M$ is (the classical truncation of) a quasi-smooth derived scheme -- because $N$ is (the classical truncation of) its $(-1)$-shifted cotangent bundle $T^\vee_M[-1]$. In this case local symmetric obstruction theories in \eqref{EEN} glue to $\EE_N$. In other words, the canonical symmetric lifting $\EE^\vee_M|_N \to \EE_M|_N$ of the morphism of cotangent complexes $\LL_{N/M}[-1] \to \LL_M|_N$ exists in the derived category, 
\beq{lift}
\xymatrix@R=6mm{
\EE^\vee_M|_N \ar[r] \ar[d] & \EE_M|_N \ar[d]\\
\LL_{N/M}[-1] \ar[r] & \LL_M|_N.
}
\eeq
$\EE_N$ is then its cone.

\smallskip

When $\T$ acts on $N$ fiberwise, $\EE^\vee_M|_N\ot \t^{-1}[1] \to \LL_{N/M}$ is the $\T$ relative perfect obstruction theory \eqref{EMvee}. The lifting \eqref{lift} becomes $\T$ invariant one $\EE^\vee_M|_N\ot \t^{-1} \to \EE_M|_N$. 
Then the cone defines the $\T$ canonical symmetric obstruction theory, which is locally
$$
\EE_N\ =\ \{\, T_{B^*}\ot \t^{-1} \ \To\ \Omega_{B^*}\,\}|_{N}.
$$
On the fixed locus $M$, $\EE_N|_M$ is decomposed into $\EE_M\oplus \EE^\vee_M[1]\ot\t^{-1}$. Hence $[N]^{\vir}_{\T}$ in \eqref{virs} is $\T$ localisation of $[N]^{\vir}$.

\smallskip
Importantly with the $\T$ action, the tautological section $\tau_{B^*}$ becomes Euler vector field $\tau_{B^*}:\cO_{B^*}\to B^*|_{B^*}\ot \t.$ Then by the (equivariant version of) cut-out model \eqref{localN} of $N$, we see its dual defines a cosection $\EE^\vee_M|_N\to\cO_N[-1]\ot\t.$ Hence the composition with $\EE^\vee_N\to \EE^\vee_M|_N$ defines an equivariant cosection
$$
\sigma\ :\ \EE^\vee_N\ \To\ \cO_N[-1]\ot\t.
$$
Note that $h^1(\sigma)$ is surjective where $\tau_{B^*}|_N$ is nonzero, which is complement of $M$, $N\take M$. The localisation of $[N]^{\vir}$ by $\sigma$ is then $[N]^{\vir}_{\sigma}$ in \eqref{virs}.

\medskip
\subsection*{Stable quasimaps}
Let $M:=Q^{\;\varepsilon}_g(\PP^4,d)$ be the moduli space of degree $d$, $\varepsilon$-stable quasimaps with genus $g$, no marked points to $\PP^4$ \cite{CKM}. The complex $\(R\pi_*f^*(\cO(1)^{\oplus 5})\)^\vee$ defined by using $\pi$ and $f$ in the universal family \eqref{universal} gives a relative perfect obstruction theory $\EE_{M/S}$ of $M$ over the moduli space of prestable curves of genus $g$, with line bundles of degree $d$, which we denote by $S$. Although we may not know if $M$ is projective, by \cite[Step 1 in Section 3.2.1]{KO}, $\EE_{M/S}$ has enough $2$-term, locally free representatives, i.e. for any $2$-term representative of coherent sheaves of $\EE_{M/S}$, there exists a $2$-term, locally free representative and a chain map from it to given representative. We pick one such
$$
\EE_{M/S}\=\{\;B'^*\ \To\ A'^*\;\}. 
$$

The $p$-field space $N$ is defined to be $N:=\Spec \Sym h^{1}\(R\pi_*f^*\cO(5)\)$ \cite{CL12, FJR, CFGKS}. Again by \cite[Lemma 2.1]{JT}, $R\pi_*f^*\cO(5)|_N[1]$ becomes a relative perfect obstruction theory $\EE_{N/M}$. By \cite[Step 1 in Section 3.2.1]{KO}, $R\pi_*f^*\cO(5)[1]$ has enough $2$-term, locally free representatives. We pick one $A\to B$, giving
$$
\EE_{N/M}\=\{\;A|_N\ \rt{d^*}\ B|_N\;\}.
$$
With this notation, $N$ has the same (relative) cut-out model \eqref{localN}.

Note that by its construction in \cite{CL12, FJR, CFGKS}, $(N,\EE_{N/M})$ is actually a pullback space by $M\to S$ so that $\EE_{M/S}|_N\oplus\EE_{N/M}$ defines a relative perfect obstruction theory $\EE_{N/S}$. Then the cone defines a virtual rank zero perfect obstruction theory of $N$,
$$
\EE_N\ :=\ \Cone\(\;\EE_{M/S}|_N\oplus\EE_{N/M}\ \To\ \LL_S|_N\).
$$
Here the restriction of the cotangent complex $\LL_S|_N$ is a bundle on $N$ since $S$ is a smooth Artin stack \cite{CKM} and $N$ is a Deligne-Mumford stack. 

\smallskip
Considering the fiberwise $\T$ action, the complex $\EE_{N/M}$ becomes an equivariant complex $R\pi_*f^*\cO(5)|_N[1]\ot\t^{-1}$ with nonzero weights, and hence $\T$-localised cycle is
\begin{align}\label{TloCalN}
[N]_{\T}^{\vir}&\=\left\{\frac{[M]^{\vir}}{e_{\T}\((R\pi_*f^*\cO(5))^\vee[-1]\ot\t\)}\right\}_{t=0}\nonumber\\
&\=(-1)^{5d+1-g}\{e_{\T}\(R\pi_*f^*\cO(5)\ot\t^{-1}\)\cap [M]^{\vir}\}_{t=0}\ \in\ A_0\(M\).
\end{align}

\smallskip
Chang-Li \cite{CL12} firstly introduced a cosection of $N$ when $\varepsilon>2$ whose degeneracy locus is $Q^{\;\varepsilon}_g(X,d)$, showing that the cosection localised invariant of $N$ is equivalent to GW invariant of the quintic $3$-fold $X$ up to sign. By several works \cite{KO, CL20, CJW, Pi} after this, it has been studied the cosection localised cycle of $[N]^{\vir}$ is equal to $(-1)^{5d+1-g}[Q^{\;\varepsilon}_g(X,d)]^{\vir}$ for any $\varepsilon$. This cosection is a sum of two pieces defined on each direct summand of $\EE^\vee_{N/S}$. Here, we are interested in the piece on $\EE^\vee_{M/S}|_N$. The below is a construction.

In \cite[Step 2 in Section 3.2.1]{KO}, a chain map representative between $\EE^\vee_{M/S}[1]=R\pi_*f^*(\cO(1)^{\oplus 5})$ and $\EE_{N/M}|_M=R\pi_*f^*\cO(5)$ induced by the defining equation of $X$ is constructed,
\beq{ChainABAB}
\xymatrix@R=6mm@C=6mm{
\EE^\vee_{M/S}[1]=R\pi_*f^*(\cO(1)^{\oplus 5}) \ar[d]  &\hspace{-15mm} = \hspace{-20mm}& \{ \hspace{-12mm}& A'\ar[r]\ar[d] &B'\ar[d] & \hspace{-13mm}\} \\
R\pi_*f^*\cO(5) &\hspace{-15mm} = \hspace{-20mm}& \{ \hspace{-12mm}& A\ar[r] & B& \hspace{-12mm}\}
}
\eeq
by choosing suitable representatives $A,B,A',B'$. Composing with $B'|_{B^*}\to B|_{B^*}$ defined above in \eqref{ChainABAB}, the dual tautological section $\tau_{B^*}:\cO_{B^*}\to B^*|_{B^*}\ot\t$ defines a homomorphism $B'|_{B^*}\to \cO_{B^*}\ot\t$. From the cut-out model \eqref{localN} of $N$, it defines a cosection $\EE^\vee_{M/S}|_N\to \cO_{N}[-1]\ot\t$ on $N$. Since the composition $\LL^\vee_S|_N[-1]\to \EE^\vee_{M/S}|_N\to \cO_{N}[-1]\ot\t$ is zero by \cite[Equation (3.16)]{KO}, we obtain a cosection
$$
\sigma\ :\ \EE^\vee_N\ \To\ \cO_N[-1]\ot\t.
$$ 
Then its degeneracy locus is $M$ \cite[Equation (3.8)]{KO}, and hence by \cite{KO, CL20, CJW, Pi} $\sigma$-localised cycle is
\beq{CoSecN}
[N]_{\sigma}^{\vir}\=(-1)^{5d+1-g}\iota_*[Q^{\;\varepsilon}_g(X,d)]^{\vir}\ \in\ A_0\(M\).
\eeq
By \eqref{TloCalN}, \eqref{CoSecN}, the statements of Theorem \ref{main1} and \ref{thm2} became equivalent now.

\medskip
\subsection{Set-up}\label{sect:SetUp}
Let $M$ be a finite type, separated Deligne-Mumford stack over a smooth Artin stack $S$ with the relative perfect obstruction theory $\EE_{M/S}$. Suppose it has enough $2$-term, locally free representatives so that we can pick one such
$$
\EE_{M/S}\ :=\ \{B'^*\ \To\ A'^*\}.
$$

For a morphism of vector bundles $d:B^*\to A^*$ on $M$, we define $N:=\ker d$. Then by \cite[Lemma 2.1]{JT}, the relative perfect obstruction theory is
\beq{setupEEN}
\EE_{N/M}\ =\ \{A|_N\ \rt{d^*}\ B|_N\}.
\eeq
So assume that $\EE_{N/M}$ has enough $2$-term, locally free representatives.

An important assumption is that there exists a lift $\EE_{N/M}[-1]\to \EE_{M/S}|_N$ of $\LL_{N/M}[-1]\to \LL_{M/S}|_N$ so that its cone defines a perfect obstruction theory $\EE_{N/S}$. Moreover we assume that $\dim S+\rk\EE_{N/S}=0$ so that the perfect obstruction theory of $N$
$$
\EE_N\ :=\ \Cone\(\EE_{N/S}[-1]\ \To\ \LL_S|_N\)
$$
defines a degree zero virtual cycle $[N]^{\vir}\in A_0(N)$.

\smallskip
The fiberwise $\T$ action on $N$ localises the virtual cycle $[N]^{\vir}$ via torus localisation \cite{GP},
$$
[N]^{\vir}_{\T}\ :=\ \left\{\frac{[M]^{\vir}}{e_{\T}\(\EE^\vee_{N/M}|_M\)}\right\}_{t=0}\ \in\ A_0\(M\).
$$

\smallskip
Assume that there exists a chain map between $\EE^\vee_{M/S}[1]$ and $\EE_{N/M}|_M$,
\beq{ChainAB}
\xymatrix@R=6mm@C=6mm{
\EE^\vee_{M/S}[1] \ar[d]  &\hspace{-15mm} = \hspace{-20mm}& \{ \hspace{-12mm}& A'\ar[r]\ar[d] &B'\ar[d] & \hspace{-13mm}\} \\
(\EE_{N/M}|_M)|_{\t=1} &\hspace{-15mm} = \hspace{-20mm}& \{ \hspace{-12mm}& A\ar[r] & B& \hspace{-12mm}\}
}
\eeq
for suitable choices of $A,B,A',B'$. With $B'\to B$ above in \eqref{ChainAB}, the tautological section $\tau_{B^*}:\cO_{B^*}\to B^*|_{B^*}\ot\t$ defines a cosection $\EE^\vee_{M/S}|_N\to \cO_N[-1]\ot\t$ on $N$, and hence it defines a cosection $\EE^\vee_{N/S}\to \cO_N[-1]\ot\t$. We assume further that the composition $\LL^\vee_S|_N[-1] \to \EE^\vee_{N/S}\to \cO_N[-1]\ot\t$ is zero so that the cosection
$$
\sigma\ :\ \EE^\vee_N\ \To\ \cO_N[-1]\ot\t 
$$
is defined. Assuming the degeneracy locus is contained in $M$, we get $\sigma$-localised cycle $[N]^{\vir}_{\sigma}\in A_0(M)$ of $[N]^{\vir}$.

\smallskip
Note that we take $S=\Spec \C$ and $\eqref{ChainAB}=\id$ for a quasi-smooth derived scheme $M$ to be in the set-up. In the following Sections, we assume $N\neq M$ since Theorems \ref{main1} and \ref{thm2} are obvious in this case.

\section{Virtual cycle on the projective completion}
We start with the set-up in Section \ref{sect:SetUp}. On the projective completion of $N$ with the $\T$ action
$$
\overline{N}\ :=\ \PP\(N \ot \t \oplus \cO_M\),
$$
we would like to define a $\T$ perfect obstruction theory, extending $\EE_N$. Then by virtual localisation \cite{GP} we localise the virtual cycle to the fixed locus
$$
\bN^\T \ \cong\ M\ \cup\ D
$$
where $M \into \bN$ is the zero section and $D := \PP\(N\)\into \bN$ is the infinity divisor. We prove its contribution lying on $M$ is $[N]^{\vir}_{\T}$ and that on $D$ is zero so that the pushdown of the virtual cycle to $M$ is $[N]^{\vir}_{\T}$.

\subsection{Extended perfect obstruction theory}\label{sect:LPOT}
To extend the perfect obstruction theory $\EE_{N/S}$ to $\bN$, we consider the quotient expression of $\bN$ -- it is a (GIT) quotient of $N \times \C$ by $\C^*$. Then we use the perfect obstruction theory of $N\times\C$
\beq{obstimesC}
\Phi \ :\ \EE_{N \times \C/S} \ :=\ \EE_{N/S} \boxplus \cO_{\C} \ \To\ \LL_{N \times \C/S} \ \cong\ \LL_{N/S} \boxplus \cO_{\C}.
\eeq

Let $\T':=\C^*$ be a torus, different from $\T$. We denote by $\t'$ the standard weight $1$ representation of $\T'$. Then we consider a $\T \times \T'$ action on $N \times \C$ to be $\(N \ot \t \ot \t'\) \oplus \(\cO_M \ot \t'\).$ Then $\bN$ is the quotient of $N \times \C$ by $\T'$ after removing the zero section
$$
\bN \ \cong\ \wt{N}/\,\T', \ \ \wt{N}:=\(N\times\C\)\take M.
$$
We denote by $q: \wt{N} \to \bN$ the quotient morphism. Sometimes thinking of $\wt{N}$ as the punctured tautological bundle $\wt{N}\cong\cO_{\bN}(-D)\take \bN$ will be useful.

By abuse of notation, we regard the perfect obstruction theory $\Phi$ \eqref{obstimesC} on $N \times \C$ as a $\T \times \T'$ equivariant one. Then the restriction 
$$
\EE_{\wt{N}/S}\ :=\ \EE_{N \times \C/S}|_{\wt{N}}
$$ 
defines 
a perfect obstruction theory since $\wt{N} \into N\times\C$ is open. 

As how we get $\bN$ as a quotient of $\wt{N}$, we would like to define {\em the quotient of $\EE_{\wt{N}/S}$}, providing a perfect obstruction theory of $\bN$ over $S$. To see where it sits on, we consider the exact triangle of cotangent complexes
$$
q^*\LL_{\bN/S}\ \To\ \LL_{\wt{N}/S} \ \To\ \Omega_q
$$
induced by a smooth morphism $q$. 
We observe from this that the place where $\EE$ is in the diagram below \eqref{twotriangle} of triangles
\beq{twotriangle}
\xymatrix@R=5mm@C=13mm{
\EE\ar[d]\ar[r]&\EE_{\wt{N}/S} \ar[d]_-{\Phi} \ar[r] &\Omega_q \ar@{=}[d]\\
q^*\LL_{\bN/S}\ar[r]&\LL_{\wt{N}/S}  \ar[r] & \Omega_q.
}
\eeq
should be for the pullback perfect obstruction theory of $\bN$ by $q$. Using the equivalence of the bounded derived categories $q^*:D\(\bN\)\to D_{\T'}\(\wt{N}\)$ \cite[Proposition 2.2.5]{BL}, we define the quotient.

\begin{Def} \label{def:twotriangle}
We define {\em the quotient of $\EE_{\wt{N}/S}$} to be 
$$
\EE_{\bN/S}\ :=\ (q^*)^{-1}\EE\=(q^*)^{-1}\Cone\(\EE_{\wt{N}/S}\to\Omega_q\)[-1].
$$
\end{Def}

Again using the equivalence $q^*:D\(\bN\)\to D_{\T'}\(\wt{N}\)$, we obtain a morphism
\beq{EElog}
\EE_{\bN/S}\ \To\ \LL_{\bN/S} \ \in\ D\(\bN\)
\eeq
whose pullback by $q^*$ recovers $\EE\to q^*\LL_{\bN/S}$.

\begin{Prop} \label{prop:EEbN}
\eqref{EElog} is a perfect obstruction theory of $\bN$ over $S$, extending $\EE_{N/S}$, i.e. $\EE_{\bN/S}|_N\cong\EE_{N/S}$.
\end{Prop}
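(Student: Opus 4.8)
The plan is to verify the three assertions of Proposition~\ref{prop:EEbN}—that \eqref{EElog} is an obstruction theory, that it is perfect, and that it restricts to $\EE_{N/S}$ on $N$—by transporting everything through the equivalence $q^*\colon D(\bN)\to D_{\T'}(\wt N)$ of \cite[Proposition 2.2.5]{BL}. Since $q^*$ is exact and conservative, a map in $D(\bN)$ is an obstruction theory (i.e. $h^0$ iso, $h^{-1}$ surjective) if and only if its pullback is, and it is perfect of a given amplitude if and only if its pullback is $\T'$-equivariantly so. Thus it suffices to check the corresponding statements for $\EE\to q^*\LL_{\bN/S}$ on $\wt N$, which is exactly the leftmost vertical in \eqref{twotriangle}.

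First I would establish perfectness: $\EE_{\wt N/S}=\EE_{N\times\C/S}|_{\wt N}=(\EE_{N/S}\boxplus\cO_\C)|_{\wt N}$ is perfect in $[-1,0]$ because $\EE_{N/S}$ is (it has enough $2$-term locally free representatives by the set-up in Section~\ref{sect:SetUp}), and $\Omega_q$ is a line bundle in degree $0$ since $q$ is a smooth morphism of relative dimension one. Taking the cone $\EE=\Cone(\EE_{\wt N/S}\to\Omega_q)[-1]$ of a map of perfect complexes keeps us perfect; a short diagram chase on the long exact cohomology sequence of the top triangle in \eqref{twotriangle}, using that $\EE_{\wt N/S}\to\Omega_q$ is the natural projection (hence surjective on $h^0$), shows $\EE$ is concentrated in $[-1,0]$, so $\EE_{\bN/S}=(q^*)^{-1}\EE$ is a perfect two-term complex. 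Next, for the obstruction theory property: the bottom row of \eqref{twotriangle} is the genuine exact triangle of cotangent complexes for the smooth morphism $q$, the right vertical is the identity, and $\Phi\colon\EE_{\wt N/S}\to\LL_{\wt N/S}$ is an obstruction theory (restriction of the product obstruction theory $\Phi$ of \eqref{obstimesC} to the open $\wt N\hookrightarrow N\times\C$). Applying the five-lemma-style comparison to the map of long exact cohomology sequences induced by the two triangles—$h^0(\EE)\to h^0(q^*\LL_{\bN/S})$ is an isomorphism and $h^{-1}(\EE)\to h^{-1}(q^*\LL_{\bN/S})$ is surjective because the analogous statements hold for $\Phi$ and $\Omega_q\xrightarrow{=}\Omega_q$—gives that $\EE\to q^*\LL_{\bN/S}$ is an obstruction theory, hence so is \eqref{EElog} after applying $(q^*)^{-1}$.

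Finally, the restriction statement $\EE_{\bN/S}|_N\cong\EE_{N/S}$: pull back the top triangle of \eqref{twotriangle} along the zero section $M\hookrightarrow\wt N$, or rather along $N\hookrightarrow\wt N$, identifying $N$ with the locus $\C=1$ (i.e. $N\times\{1\}\subset N\times\C$, which maps isomorphically to the corresponding chart of $\bN$ away from $D$). Over this locus $q$ restricts to an isomorphism onto its image, so $\Omega_q|_N=0$, and hence $\EE|_N\cong\EE_{\wt N/S}|_N\cong(\EE_{N/S}\boxplus\cO_\C)|_N$; the $\cO_\C$ summand has no effect after restricting to $N\times\{1\}$ and the compatible slice in $\bN$, yielding $\EE_{\bN/S}|_N\cong\EE_{N/S}$ compatibly with the maps to the cotangent complexes. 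I expect the main obstacle to be bookkeeping rather than anything deep: one must be careful that the equivalence $q^*$ genuinely intertwines the relative cotangent complexes as claimed (so that $(q^*)^{-1}\EE$ really receives a map to $\LL_{\bN/S}$, not merely to something pulling back to $q^*\LL_{\bN/S}$), and that the $\T\times\T'$-equivariant structures are tracked correctly through the cone and the descent—in particular that the weight-$\t'$ grading used to form the quotient is exactly the one for which $q^*$ is an equivalence onto $D_{\T'}(\wt N)$. Once the equivariant descent dictionary is set up cleanly, each of the three claims reduces to the corresponding elementary fact about $\Phi$ on the open subscheme $\wt N\subset N\times\C$.
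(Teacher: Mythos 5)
Your overall strategy is the same as the paper's: work on $\wt N$ through the equivalence $q^*\colon D(\bN)\to D_{\T'}(\wt N)$, read off perfectness and the obstruction-theory property from the long exact cohomology sequences of the two rows of \eqref{twotriangle}, and then restrict to $q^{-1}(N)$ for the extension statement. The first two parts are fine: your claim that $h^0(\EE_{\wt N/S})\to\Omega_q$ is onto (which kills $h^1(\EE)$) is exactly the paper's point, justified by factoring through $h^0(\LL_{\wt N/S})\twoheadrightarrow\Omega_q$, and the five-lemma comparison is the paper's argument verbatim.

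There is, however, a concrete error in your restriction step. You assert that over the slice $N\times\{1\}$ one has $\Omega_q|_N=0$ because $q$ restricts to an isomorphism there. This is false: $\Omega_q$ is a line bundle on all of $\wt N$ (being the relative cotangent bundle of the smooth morphism $q$ of relative dimension one), and the restriction of a line bundle to a closed subscheme is not zero. If $\Omega_q|_N$ really vanished, your computation would give $\EE_{\bN/S}|_N\cong\EE_{N/S}\oplus\cO_N$, which has the wrong virtual rank; you then wave this away by saying the $\cO_\C$ summand ``has no effect,'' but with $\Omega_q|_N=0$ there is nothing in the cone to cancel it. The correct argument (and the paper's) is the opposite: over $q^{-1}(N)=N\times\C^*$ the map $q$ is the projection to $N$, so $\Omega_q|_{q^{-1}(N)}\cong pr_2^*\Omega_{\C^*}$ is \emph{nonzero}, and the composite $\cO_{\C^*}\hookrightarrow\EE_{\wt N/S}|_{q^{-1}(N)}\to\Omega_q|_{q^{-1}(N)}$ is an isomorphism; hence the cocone $\EE|_{q^{-1}(N)}$ of $\EE_{\wt N/S}\to\Omega_q$ is exactly the pullback of $\EE_{N/S}$, and applying $(q^*)^{-1}$ gives $\EE_{\bN/S}|_N\cong\EE_{N/S}$. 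So your conclusion is right but via two cancelling wrong claims; replace ``$\Omega_q|_N=0$'' with ``the $\cO_\C$ summand maps isomorphically onto $\Omega_q$ over $q^{-1}(N)$'' and the step is repaired.
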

\begin{proof}
Let us prove $\EE$ is a perfect complex of amplitude $[-1,0]$ first. Since $h^i\(\EE_{\wt{N}/S}\)=0$ for $i\neq -1,0$, we have $h^i\(\EE\)=0$ for $i\neq -1,0,1$. The induced morphism $h^0(\LL_{\wt{N}/S})\to\Omega_q$ is onto, 
showing $h^1(\EE)=0$. Hence $\EE$ is a perfect complex of amplitude $[-1,0]$. Next, by comparing the long exact sequences induced by each row of \eqref{twotriangle}, we can check that $h^0(\EE)\to h^0(q^*\LL_{\bN/S})$ is an isomorphism, and $h^{-1}(\EE)\to h^{-1}(q^*\LL_{\bN/S})$ is onto. 

Since $q$ is smooth, $\EE_{\bN/S}$ is also a perfect complex of amplitude $[-1,0]$, $h^0(\EE_{\bN/S})\to h^0(\LL_{\bN/S})$ is an isomorphism, and $h^{-1}(\EE_{\bN/S})\to h^{-1}(\LL_{\bN/S})$ is onto. Hence \eqref{EElog} is a perfect obstruction theory of $\bN$ over $S$. 

Since the morphism $q$ on $q^{-1}(N)=N\times\C^*\subset \wt{N}$ is the projection to $N$, we have $\EE|_{q^{-1}(N)}\cong\EE_{N/S}|_{q^{-1}(N)}$ by the construction of $\EE$ in \eqref{twotriangle}. So 
\begin{align*}
\EE_{\bN/S}|_N\,\cong\,\((q^*)^{-1}\EE\)|_{N}\,&\cong\,(q^*)^{-1}\(\EE|_{q^{-1}(N)}\)\\
\,& \cong\,(q^*)^{-1}\(\EE_{N/S}|_{q^{-1}(N)}\)\,\cong\,\EE_{N/S},
\end{align*}
proving $\EE_{\bN/S}$ is an extension of $\EE_{N/S}$.
\end{proof}

\subsection{Relative perfect obstruction theory}
As how we define $\EE_{\bN/S}$, a relative perfect obstruction theory $\EE_{\bN/M}$ of $\bN$ over $M$ can be defined as the quotient of $(\EE_{N/M}\boxplus\cO_{\C})|_{\wt{N}}$ in Definition \ref{def:twotriangle}. This is an extension of the relative perfect obstruction theory \eqref{setupEEN} with the $\T$ action
\beq{TsetupEEN}
\EE_{N/M}\,=\,\{A|_N\ot\t^{-1} \rt{d^*} B|_N\ot\t^{-1}\}.
\eeq 
In this section we provide a cut-out expression of $\bN$ defining $\EE_{\bN/M}$. 

Let $\bcB$ denote the projective completion $\PP\((B^*\ot \t) \oplus \cO_M\)$, which is smooth over $M$. Then $\bN$ is a cut-out of $\bcB$ as in the following extended picture of \eqref{localN},
\beq{locbN}
\xymatrix@=18pt{
& A^*|_{\bcB}(\ccD) \ot \t  \ar[d] \\  
 \bN\ =\ (d\circ\overline{\tau})^{-1}(0)\ \subset\hspace{-12mm} & \bcB,\ar@/^{-2ex}/[u]_{d\circ \overline{\tau}}}
\eeq
where $\ccD:= \PP\(B^*\)\into \bcB$ is the infinity divisor. The section $\overline{\tau}$ is given by the tautological line bundle 
\beq{EulerbcBM}
0\, \to\, \cO_{\bcB}(-\ccD) \ \rt{\overline{\tau}\, \oplus\, s_{\ccD} }\ \(B^*|_{\bcB} \ot \t\) \oplus \cO_{\bcB}\, \to\, T_{\bcB/M}(-\ccD) \, \to\, 0.
\eeq

\begin{Lemma}\label{Lem:Er}
The relative perfect obstruction theory $\EE_{\bN/M}$ is represented by
\beq{relEElog}
\EE_{\bN/M} \ \cong\ \{\,A|_{\bcB}(-\ccD) \ot \t^{-1} \ \rt{d(d\circ \overline{\tau})^*}\ \Omega_{\bcB/M} \,\}|_{\bN}.
\eeq
\end{Lemma}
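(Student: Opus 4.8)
The strategy is to compute the quotient of $(\EE_{N/M}\boxplus\cO_{\C})|_{\wt N}$ directly from the cut-out model, exactly as the relative perfect obstruction theory $\EE_{N/M}$ of \eqref{setupEEN} was obtained from \eqref{localN} via \cite[Lemma 2.1]{JT}. First I would set up the $\T\times\T'$-equivariant cut-out picture for $\wt N\subset B^*\times\C$: the tautological section $\tau_{B^*}\colon\cO\to B^*|_{B^*}\ot\t$ together with the coordinate on $\C$ (with weight $\t'$, and $\t\ot\t'$ on the $B^*$ factor as in Section \ref{sect:LPOT}) cuts out $N\times\C$, and restricting to $\wt N$ we get the relative obstruction theory $\{A|_{\wt N}\ot\t^{-1}\ot\t'^{-1}\to B|_{\wt N}\ot\t^{-1}\ot\t'^{-1}\}$ paired with the trivial $\cO_\C$ summand. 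Then I would form $\Cone(\,\cdot\,\to\Omega_q)[-1]$ following Definition \ref{def:twotriangle}, where $q\colon\wt N\to\bN$ is the $\T'$-quotient and $\Omega_q$ is the relative cotangent (a line bundle, since $q$ is a principal $\C^*$-bundle, identified with $\cO_{\wt N}$ up to the $\t'$-twist because $\wt N=\cO_{\bN}(-D)\setminus\bN$).

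The key geometric input is the identification of $\bcB=\PP((B^*\ot\t)\oplus\cO_M)$ as the $\T'$-quotient of $(B^*\times\C)\setminus M$, under which $\cO_{\bcB}(-\ccD)$ pulls back to the punctured tautological bundle, and the Euler sequence \eqref{EulerbcBM}. Descending the cut-out data along this quotient: the section $\tau_{B^*}$ together with the $\C$-coordinate descends to $\overline\tau\oplus s_{\ccD}$ of $(B^*|_{\bcB}\ot\t)\oplus\cO_{\bcB}$ twisted by $\cO_{\bcB}(-\ccD)$, so that $d\circ\overline\tau$ is a section of $A^*|_{\bcB}(\ccD)\ot\t$ cutting out $\bN\subset\bcB$ as in \eqref{locbN}. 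Applying \cite[Lemma 2.1]{JT} (equivalently, the two-term cut-out recipe) to this model gives precisely $\{A|_{\bcB}(-\ccD)\ot\t^{-1}\to\Omega_{\bcB/M}\}|_{\bN}$: the degree $0$ term is $\Omega_{\bcB/M}|_{\bN}$ rather than $B|_{\bN}\ot\t^{-1}$ because the conormal/cotangent bookkeeping now happens inside the projective completion $\bcB$, where the fiber coordinates on $B^*$ have been projectivised; the twist by $\cO(-\ccD)$ on the $A$-term records exactly the $\cO_{\bcB}(-\ccD)$-twist in \eqref{locbN}, and $d(d\circ\overline\tau)^*$ is the differential of the defining section.

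The step I expect to be the main obstacle is matching the cones carefully: one must check that quotienting the cut-out obstruction theory of $\wt N$ and then forming $\Cone(-\to\Omega_q)[-1]$ yields the \emph{same} object in $D(\bN)$ as the cut-out obstruction theory attached directly to $\bN\subset\bcB$, i.e. that the relative cotangent direction $\Omega_q$ of the $\C^*$-bundle is precisely absorbed into passing from $B^*$-coordinates to $\bcB$-coordinates (from $B|_N\ot\t^{-1}$ to $\Omega_{\bcB/M}|_{\bN}$), with all $\t,\t'$ weights tracked so that the final answer carries only the $\t$-twist shown in \eqref{relEElog}. Concretely this amounts to comparing the Euler sequence \eqref{EulerbcBM} with the sequence $0\to\cO\to B^*|_{B^*}\ot\t\to T_{B^*/M}\to 0$ on $B^*$ together with the trivial $\C$-factor, and identifying the induced map of cones; once that diagram chase is in place, restricting to $\bN$ and using Proposition \ref{prop:EEbN} (that $\EE_{\bN/M}$ extends \eqref{TsetupEEN}, which serves as a consistency check on $N\subset\bN$) finishes the argument.
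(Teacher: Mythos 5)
Your proposal is correct and follows essentially the same route as the paper: compute $q^*\EE_{\bN/M}$ as the cocone of $(\EE_{N/M}\boxplus\cO_\C)|_{\wt N}\to\Omega_q$ per Definition \ref{def:twotriangle}, identify $\Omega_q$ via the punctured-tautological-bundle description of $\wt N$, and recognise the degree-zero term as $\Omega_{\bcB/M}$ through the (twisted dual) Euler sequence \eqref{EulerbcBM}, with the $\cO(-\ccD)$ and $\t$ twists tracked exactly as you indicate. The ``main obstacle'' you flag is precisely what the paper resolves by writing the map to $\Omega_q$ as the explicit chain map whose horizontal arrow is a piece of $\(\eqref{EulerbcBM}\otimes\cO_{\bcB}(\ccD)\)|_{\wt\cB}$, so your plan is on target.
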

\begin{proof}
We use the identifications of the tangent bundle of $q$
\beq{Tq}
T_q\ \cong\ \cO_{\wt{N}}\ 
\cong\ \cO_{\bN}(-D)|_{\wt{N}}\ot \t',
\eeq
obtained by the Euler sequence of $\PP\(\cO_{\bN}(-D)\oplus\cO_{\bN}\)$. Here, we consider $\wt{N}$ as the punctured tautological bundle $\cO_{\bN}(-D)\take \bN$.

By Definition \ref{def:twotriangle}, the pullback $q^*\EE_{\bN/M}$ is the cocone of 
\beq{this}
\EE_{N/M}\ot\t'^{-1}\boxplus\cO_{\C}\ot\t'^{-1} \ \To\ \Omega_q.
\eeq
Using the global resolution of $\EE_{N/M}$ \eqref{TsetupEEN} and \eqref{Tq}, we observe that the above morphism \eqref{this} is represented by a chain map 
\beq{chainAB}
\xymatrix@R=5mm{
B|_{\wt{\cB}}\ot(\t\ot\t')^{-1} \oplus \cO_{\wt{\cB}}\ot\t'^{-1} \ar[r] & \cO_{\wt{\cB}} \\ 
A|_{\wt{\cB}}\ot(\t\ot\t')^{-1}. \ar[u]^-{d^*\oplus\,0} 
}
\eeq
on $\wt{N}\subset \wt{\cB}:=(B^*\times\C)\take M$. Note that the composition 
$$
A|_{\wt{\cB}}\ot(\t\ot\t')^{-1}\ \rt{d^*}\ B|_{\wt{\cB}}\ot(\t\ot\t')^{-1}\ot(\t\ot\t')^{-1}\ \To\ \cO_{\wt{\cB}}
$$ 
is zero on $\wt{N}$. The horizontal morphism of \eqref{chainAB} is a part of the dual Euler sequence $\(\eqref{EulerbcBM}\otimes\, \cO_{\bcB}(\ccD)\)|_{\wt{\cB}}$. So its kernel is $\Omega_{\bcB/M}|_{\wt{\cB}}$. The bundle at the bottom of \eqref{chainAB} is $\(A|_{\bcB}(-\ccD)\)|_{\wt{\cB}}\ot\t^{-1}$ by \eqref{Tq}, which induces $\cO_{\bcB}(\ccD)|_{\wt{\cB}}=\t'$. So $(q^*)^{-1}\(\eqref{chainAB}|_{\wt{N}}\)$ gives the representative \eqref{relEElog}.
\end{proof}

\subsection{Virtual cycle}
Now we define a perfect obstruction theory of $\bN$ to be
$$
\EE_{\bN}\ :=\ \Cone\(\EE_{\bN/S}[-1]\ \To\ \LL_S|_{\bN}\),
$$
which produces a degree zero virtual cycle
$$
[\bN]^{\vir} \ \in\ A_0\(\bN\).
$$
By Proposition \ref{prop:EEbN}, $\EE_{\bN}$ is an extension of $\EE_N$. 

\begin{Prop} \label{Prop:log}
The virtual cycle $[\bN]^{\vir}$ pushes down to $[N]^{\vir}_\T$ defined in \eqref{virs} by the projection morphism $\pi: \bN \to M$,
$$
[N]^{\vir}_\T \ =\ \pi_*[\bN]^{\vir} \ \in\ A_0\(M\).
$$
\end{Prop}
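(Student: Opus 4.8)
The plan is to apply the virtual localisation formula of Graber--Pandharipande \cite{GP} to the $\T$-equivariant perfect obstruction theory $\EE_{\bN}$. Its $\T$-fixed locus is $\bN^\T=M\sqcup D$ with $D=\PP(N)$ the infinity divisor, so localisation reads
\[
[\bN]^{\vir}_{\T}\ =\ \iota_{M*}\frac{[M]^{\vir}}{e_{\T}\(N^{\vir}_{M/\bN}\)}\ +\ \iota_{D*}\frac{[D]^{\vir}}{e_{\T}\(N^{\vir}_{D/\bN}\)}
\]
in $A^{\T}_*\(\bN\)$ after inverting $t$, where $[\bN]^{\vir}_{\T}$ is the natural equivariant refinement of $[\bN]^{\vir}$. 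Since $\bN$ is a closed subscheme of $\bcB=\PP\(B^*\ot\t\oplus\cO_M\)$ and hence projective over $M$, the morphism $\pi$ is proper; I will push this identity forward along $\pi$, show the $D$-term dies, identify the $M$-term with the expression defining $[N]^{\vir}_{\T}$, and then pass to the non-equivariant realisation.

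For the $M$-contribution I would use that $\EE_{\bN}$ extends $\EE_N$ (Proposition \ref{prop:EEbN}), so $\EE_{\bN}|_M\cong\EE_N|_M$. Because $\EE_{M/S}$ and $\LL_S$ carry the trivial $\T$-action while $\EE_{N/M}|_M=\{A|_M\ot\t^{-1}\to B|_M\ot\t^{-1}\}$ from \eqref{TsetupEEN} has pure nonzero weight, the $\T$-fixed part of $\EE_N|_M$ is the absolute perfect obstruction theory $\EE_M=\Cone\(\EE_{M/S}[-1]\to\LL_S|_M\)$ of $M$ and its moving part is $\EE_{N/M}|_M$. Hence the fixed theory produces $[M]^{\vir}$, the virtual normal bundle is $N^{\vir}_{M/\bN}=\EE^\vee_{N/M}|_M$, and the $M$-term of the formula is $[M]^{\vir}/e_{\T}\(\EE^\vee_{N/M}|_M\)$.

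The real work, and the step I expect to be the main obstacle, is showing the $D$-contribution vanishes. I would restrict the relative perfect obstruction theory of Lemma \ref{Lem:Er}, $\EE_{\bN/M}\cong\{A|_{\bcB}(-\ccD)\ot\t^{-1}\to\Omega_{\bcB/M}\}|_{\bN}$, to $D$, which sits inside $\ccD=\PP\(B^*\)$ as $\PP$ of the subcone $N=\ker d\subset B^*$, and track $\T$-weights. Since $\cO_{\bcB}(-\ccD)$ is the tautological subbundle in the Euler sequence \eqref{EulerbcBM}, its restriction to $\ccD$ is $\cO_{\PP(B^*)}(-1)\ot\t$, so the obstruction term restricts on $D$ to $A|_D\ot\cO_{\PP(N)}(-1)$, which is $\T$-fixed (the $\t$ picked up exactly cancels the twist $\t^{-1}$); on the other hand the conormal sequence of $\ccD\subset\bcB$ pulled back to $D$ is $0\to\cO_{\PP(N)}(-1)\ot\t\to\Omega_{\bcB/M}|_D\to\Omega_{\PP(B^*)/M}|_D\to 0$, and since $\PP\(B^*\ot\t\)$ carries the trivial $\T$-action its quotient is $\T$-fixed. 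Thus the moving part of $\EE_{\bN/M}|_D$ -- equivalently of $\EE_{\bN}|_D$, as the $S$- and $M/S$-directions are fixed -- is the single line bundle $\cO_{\PP(N)}(-1)\ot\t$ in cotangent degree zero, so $N^{\vir}_{D/\bN}$ is the rank-one bundle $\cO_{\PP(N)}(1)\ot\t^{-1}$ and the $\T$-fixed perfect obstruction theory on $D$ has virtual dimension $\vd\(\bN\)-1=-1$. Therefore $[D]^{\vir}=0\in A_{-1}\(D\)=0$ and the second summand drops out.

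Putting this together, $[\bN]^{\vir}_{\T}=\iota_{M*}\(\,[M]^{\vir}/e_{\T}\(\EE^\vee_{N/M}|_M\)\)$, so $\pi_*[\bN]^{\vir}_{\T}=[M]^{\vir}/e_{\T}\(\EE^\vee_{N/M}|_M\)$; in particular the right-hand side has no pole in $t$, i.e. is a polynomial in $t$ with coefficients in $A_*\(M\)$. Taking the non-equivariant realisation, which sends $t\mapsto 0$ and commutes with the proper pushforward $\pi_*$, turns the left side into $\pi_*[\bN]^{\vir}$ and the right side into its constant term $\{[M]^{\vir}/e_{\T}\(\EE^\vee_{N/M}|_M\)\}_{t=0}=[N]^{\vir}_{\T}$, which is the claim. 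The only point beyond the weight bookkeeping on $D$ that needs care is this last passage between the equivariant and non-equivariant cycles, which is standard.
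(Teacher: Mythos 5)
Your proposal is correct and follows essentially the same route as the paper: apply $\T$-virtual localisation to $\EE_{\bN}$, identify the $M$-contribution with $[M]^{\vir}/e_{\T}\(\EE^\vee_{N/M}|_M\)$ via the fixed/moving splitting, and kill the $D$-contribution by showing the moving part of $\EE_{\bN/M}|_D$ (from Lemma \ref{Lem:Er}) has virtual rank $1$, so the fixed theory on $D$ has virtual rank $-1$. Your weight bookkeeping on $D$ (the tautological twist making $A|_D(-\ccD)\ot\t^{-1}$ fixed and the conormal line moving) is exactly the paper's argument, just spelled out a bit more explicitly.
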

\begin{proof}
We prove it via $\T$ virtual localisation \cite{GP}. To apply it to $[\bN]^{\vir}$, we need to investigate the fixed and moving parts of the perfect obstruction theory $\EE_{\bN}$ on the fixed locus $\bN^\T=M\cup D$. To see this, we use a triangle 
\beq{tritritri}
\EE_{M/S}|_{\bN}\ \To\ \EE_{\bN/S}\ \To\ \EE_{\bN/M},
\eeq
induced by a diagram of triangles we have constructed so far
$$
\xymatrix@R=5mm@C=7mm{
q^*\EE_{M/S}|_{\bN}\ar@{=}[r]\ar@{-->}[d]& \EE_{M/S}|_{\wt{N}}\ar[d]\\
q^*\EE_{\bN/S}\ar[r]\ar@{-->}[d]&\EE_{\wt{N}/S} \ar[d] \ar[r] &\Omega_q \ar@{=}[d]\\
q^*\EE_{\bN/M}\ar[r]&\EE_{N/M}\boxplus\cO_{\C}|_{\wt{N}}  \ar[r] & \Omega_q.
}
$$
Note that the middle and bottom horizontals are coming from the definitions of $\EE_{\bN/S}$ and $\EE_{\bN/M}$ Definition \ref{def:twotriangle}. The mid-vertical is obtained by the triangle
$$
\EE_{M/S}|_N\ \To\ \EE_{N/S}\ \To\ \EE_{N/M}.
$$
Then we get the left vertical, giving \eqref{tritritri}. It tells us that the moving part of $\EE_{\bN}|_{\bN^{\T}}$ is isomorphic to that of $\EE_{\bN/M}|_{\bN^{\T}}$.

On $M$, we have $\EE_{\bN/M}|_M\cong \EE_{N/M}|_M$ which is the moving part. So the contribution of $\T$ localisation of $[\bN]^{\vir}$ on $M$ is $[N]^{\vir}_{\T}$.

It remains to show that the contribution on $D$ is zero. We use the representative of $\EE_{\bN/M}|_D$ obtained by Lemma \ref{Lem:Er}
$$
\EE_{\bN/M}|_D \ \cong\ \{\,A|_{D}(-\ccD) \ot \t^{-1} \ \rt{d(d\circ \overline{\tau})^*}\ \Omega_{\bcB/M}|_D \,\}|_{\bN}.
$$
The bundle $A|_{D}(-\ccD) \ot \t^{-1}$ is fixed since the tautological line bundle $\cO_{D}(-\ccD)$ is contained in $B^*|_{D}\ot\t$. Whereas $\Omega_{\bcB/M}|_D$ contains a conormal bundle $N^*_{D/\bN}$ which is not fixed. Hence the moving part of $\EE_{\bN/M}|_D$ has virtual rank $1$, which implies the fixed part of $\EE_{\bN}|_D$ has virtual rank $-1$. So the contribution on $D$ is zero.
\end{proof}

\section{Reduced cycle on the projectivisation}
In this section we find an extension
$$
\bsigma\ :\ \EE_{\bN}^\vee \ \To\ \cO_{\bN}(D)[-1] \ot \t,
$$
of the cosection $\sigma: \EE_N^\vee\to \cO_N[-1]\ot\t$. Assuming $h^1(\bsigma)$ is surjective on $D\subset \bN$ it defines the reduced cycle $[\PP(N)]^{\red}$. Then we prove $\bsigma$-localised cycle of $[\bN]^{\vir}$ is a sum $[N]^{\vir}_{\sigma}+[\PP(N)]^{\red}\in A_0(M\cup D)$.

\subsection{Extended twisted cosection}
The Euler sequence on $\bcB$ induces a homomorphism
$$
B|_{\bcB}\ \rt{\btau}\ \cO_{\bcB}(\ccD)\ot\t,
$$
which is surjective on $\ccD$, extending the dual tautological section $\tau_{B^*}$. Using the cut-out model of $\bN\subset \bcB$ \eqref{locbN} we see the composition with $B'|_{\bcB}\to B|_{\bcB}$ in \eqref{ChainAB} defines a cosection on $\bN$ 
$$
\EE^\vee_{M/S}|_{\bN}\ \To\ \cO_{\bN}(D)[-1]\ot \t.
$$
The composition $\LL_S^\vee[-1]|_{\bN}\to \EE^\vee_{\bN/S}\to \EE^\vee_{M/S}|_{\bN}\to \cO_{\bN}(D)[-1]\ot \t$ is zero because it is zero on $N$ by the assumption in Section \ref{sect:SetUp}. Hence it defines a cosection
$$
\bsigma\ :\ \EE^\vee_{\bN}\ \To\ \cO_{\bN}(D)[-1]\ot \t.
$$
Obviously it is an extension of $\sigma: \EE^\vee_N\to \cO_N[-1]\ot\t$.

\subsection{Reduced cycle}\label{sect:Red}
$\PP(N)=D$ is equipped with the perfect obstruction theory $\EE_{D}:=\EE_{\bN}|_D^{\mathrm{fix}}$ (of virtual rank $-1$) and the morphism $\bsigma_D:=\bsigma|_{\EE^\vee_D}:\EE^\vee_D\to \cO_D(D)[-1]\ot\t$. Note that $\EE_D$ is described as a quotient
$$
\EE_{D}\ \cong\ \Cone\(\,\EE_N|_{N\take M}\ \To\ \Omega_{(N\take M)/D}\)[-1]
$$
through the equivalence $D_{\C^*}\(N\take M\)\cong D\(D\)$. Now we assume the surjectivity of $h^1(\bsigma)$ on $D$, inducing the surjectivity of $h^1(\bsigma_D)$. In this case, Kiem-Li proved the cone reduction property \cite[Corollary 4.5]{KL}
\beq{INCLUSION}
\fC_D\ \subset\ h^1/h^0\(\Cone(\bsigma_D)\),
\eeq
where $\fC_D$ is the intrinsic normal cone of $D$ and $h^1/h^0$ denotes the cone stack associated to a complex defined in \cite{BF}. The surjectivity of $h^1(\bsigma_D)$ allows $h^1/h^0(\Cone(\bsigma_D))$ to be actually a bundle stack so that the Gysin map
$$
0^!_{h^1/h^0(\Cone(\bsigma_D))}\ :\ A_{0}\Big(h^1/h^0\(\Cone(\bsigma_D)\)\Big)\ \To\ A_{0}\(D\)
$$
is defined \cite{Kr}. 
\begin{Def}\label{def:redcycle}
{\em The reduced cycle} is defined to be 
$$
[\PP(N)]^{\red}\ :=\ 0^!_{h^1/h^0(\Cone(\bsigma_D))}[\fC_D] \ \in\ A_{0}\(D\).
$$
\end{Def}

Note that the inclusion \eqref{INCLUSION} may not be a scheme theoretic embedding, but a set-theoretic embedding. Hence $\Cone(\bsigma_D)^\vee$ may not be a perfect obstruction theory in general.

\begin{Thm} \label{main2}
We obtain the following comparison result
$$
[N]^{\vir}_{\T}\ -\ [N]^{\vir}_{\sigma}\=p_*[\PP(N)]^{\red}\ \in\ A_0(M),
$$
where $p:\PP(N)\to M$ is the projection morphism.
\end{Thm}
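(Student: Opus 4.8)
The plan is to compute the $\bsigma$-localised virtual cycle of $[\bN]^{\vir}$ in two ways and compare. On one hand, since $\bsigma$ extends $\sigma$ and $\Cone(\bsigma_D)^\vee$ is a (pseudo) perfect obstruction theory with surjective $h^1$ along the boundary $D$, the Kiem-Li construction produces a cosection-localised class $[\bN]^{\vir}_{\bsigma}\in A_0(M\cup D)$, where the degeneracy locus of $\bsigma$ is $M\cup D$: on $N$ it is $M$ by the Set-up assumptions, and on $D$ it is all of $D$ by the virtual-rank count used in Proposition \ref{Prop:log}. On the other hand, $\cO_{\bN}(D)$ restricts trivially to $M$ (the zero section lies away from $D$), so $\bsigma$ is a genuine (non-twisted) cosection in a neighbourhood of $M$, agreeing there with $\sigma$; hence the part of $[\bN]^{\vir}_{\bsigma}$ supported near $M$ is $[N]^{\vir}_{\sigma}$. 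The remaining part, supported on $D$, is exactly the reduced cycle $[\PP(N)]^{\red}$ by Definition \ref{def:redcycle}, since the Kiem-Li localisation of a virtual cycle along a component where the cosection vanishes identically is computed by the Gysin pullback against $h^1/h^0(\Cone(\bsigma_D))$ of the intrinsic normal cone $\fC_D$ (this is the content of Kiem-Li's functoriality of cosection localisation, together with the cone-reduction inclusion \eqref{INCLUSION}). Combining, $[\bN]^{\vir}_{\bsigma}=[N]^{\vir}_{\sigma}+[\PP(N)]^{\red}$ in $A_0(M\cup D)$.

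**Closing the argument.** Pushing this identity forward along $\pi:\bN\to M$ (which restricts to $p$ on $D$ and to the identity on $M$) and using Proposition \ref{Prop:log}, which gives $\pi_*[\bN]^{\vir}=[N]^{\vir}_{\T}$, together with the compatibility of cosection localisation with proper pushforward (the pushforward of $[\bN]^{\vir}_{\bsigma}$ localising $[\bN]^{\vir}$ equals the cosection localisation of $\pi_*[\bN]^{\vir}$ when $\pi$ is proper and the cosection descends), I obtain
$$
[N]^{\vir}_{\T}\ -\ [N]^{\vir}_{\sigma}\ =\ p_*[\PP(N)]^{\red}\ \in\ A_0(M).
$$
The one subtlety to check is that the cosection $\bsigma$ does descend in the sense needed: $\cO_{\bN}(D)\ot\t$ is the relevant line bundle upstairs, and since the degeneracy locus $M\cup D$ maps to $M$, the naive pushforward of the localised class is well-defined and equals the $\sigma$-localised pushforward on the $M$-part; the $D$-part contributes $p_*[\PP(N)]^{\red}$ by construction.

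**Main obstacle.** The hard part will be justifying the decomposition $[\bN]^{\vir}_{\bsigma}=[N]^{\vir}_{\sigma}+[\PP(N)]^{\red}$ cleanly, because $\Cone(\bsigma_D)^\vee$ is only a perfect complex with the formal properties of an obstruction theory --- the inclusion \eqref{INCLUSION} is merely set-theoretic, so one cannot appeal directly to a virtual-pullback formalism. The right tool is Kiem-Li's original localisation-by-cosection construction applied directly to $[\bN]^{\vir}$ with the meromorphic/twisted cosection $\bsigma$, localising to the (reducible) degeneracy locus $M\sqcup D$; by the very definition of their construction the resulting class splits as a sum over the connected components of the degeneracy locus, and on each component it is computed by the appropriate Gysin map against the cone stack of the restricted cosection's cone. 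On $M$ this reproduces the Kiem-Li formula for $[N]^{\vir}_{\sigma}$ (the twist is trivial there), and on $D$ it reproduces Definition \ref{def:redcycle}. Making this component-wise splitting precise --- in particular checking that the intrinsic normal cone $\fC_{\bN}$ restricted over a neighbourhood of $M$, respectively over $D$, decomposes compatibly with the restrictions of $\Cone(\bsigma_D)$ --- is where the real work lies, and I expect it to mirror the localised-cone analysis in \cite{KL} combined with the explicit representative of $\EE_{\bN/M}|_D$ from Lemma \ref{Lem:Er}.
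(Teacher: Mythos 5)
Your strategy is the one the paper actually follows: localise $[\bN]^{\vir}$ to $M\cup D$ by the twisted cosection $\bsigma$, identify the piece on $M$ with $[N]^{\vir}_{\sigma}$ and the piece on $D$ with $[\PP(N)]^{\red}$, then push forward using Proposition \ref{Prop:log}. Two points need attention, however.

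First, a conceptual slip: $D$ is \emph{not} in the degeneracy locus of $\bsigma$. On the contrary, the construction of $[\PP(N)]^{\red}$ requires $h^{1}(\bsigma)$ to be \emph{surjective} along $D$, and the induced bundle map $E\to\cO_{\bN}(D)$ is onto everywhere outside $M$. The contribution from $D$ arises purely from the twist by $\cO_{\bN}(D)$ (equivalently, from the pole of the associated meromorphic cosection), not from the virtual-rank count of Proposition \ref{Prop:log}, which concerns torus-fixed and moving parts and is unrelated to the cosection.

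Second, the step you defer as the ``main obstacle'' is precisely the content of the paper's proof, and it is not available off the shelf from \cite{KL}: since $\Cone(\bsigma_D)^{\vee}$ is not a perfect obstruction theory and the inclusion \eqref{INCLUSION} is only set-theoretic, no functoriality statement for cosection localisation applies directly. The paper argues with explicit cycles instead: choose a global resolution $\{E^{*}\to T^{*}\}$ of $\EE_{\bN}$; blow up $M$ so that $\bsigma$ becomes a surjection $E|_{Bl}\twoheadrightarrow\cO_{Bl}(D-D_{0})$ with kernel $F$; use cone reduction to split $[C_{\bN}]=[C_{1}]+[C_{2}]$ with $[C_{1}]$ supported on $E|_{M}$ and $[C_{2}]$ on $F$; verify that $0^{!}_{E|_{M}}[C_{1}]+b_{*}\((D-D_{0})\cap 0^{!}_{F}[C_{2}]\)$ pushes forward to $[\bN]^{\vir}$ (this is where the extra $D$-term, absent for an untwisted cosection, is accounted for); recognise $0^{!}_{E|_{M}}[C_{1}]-b_{*}\(D_{0}\cap 0^{!}_{F}[C_{2}]\)$ as $[N]^{\vir}_{\sigma}$ by the definition of cosection localisation; and finally identify $b_{*}\(D\cap 0^{!}_{F}[C_{2}]\)$ with $[\PP(N)]^{\red}$ by proving $F|_{D}\cap C_{N_{D/\bN}}=C_{D}$ through a local smooth-embedding computation. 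Until you supply an argument of this kind, the decomposition $[\bN]^{\vir}_{\bsigma}=[N]^{\vir}_{\sigma}+[\PP(N)]^{\red}$ remains an assertion rather than a proof.
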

\begin{proof}
Proposition \ref{Prop:log} implies
$$
[N]^{\vir}_{\T}\=\pi_*[\bN]^{\vir}\ \in\ A_0(M).
$$
So it is enough to show that $[N]^{\vir}_{\sigma}+[\PP(N)]^{\red}$ is a localisation of $[\bN]^{\vir}$.

Take any global representative 
$$
\EE_{\bN}\=\{\;E^*\ \To\ T^*\}.
$$ 
Then the Behrend-Fantechi cone $C_{\bN}$ (of $\dim=\rk T=\rk E$) lies in $E$ and the virtual cycle is its intersection with the zero section
\beq{FulDef}
[\bN]^{\vir}\=0^!_E[C_{\bN}].
\eeq
The twisted cosection $\bsigma$ induces a homomorphism
$$
\bsigma\ :\ E\ \To\ \cO_{\bN}(D),
$$
which we denote also by $\bsigma$ by abuse of notation. It is onto outside of $M\into \bN$. Hence on the blowup $b:Bl:=Bl_M\bN\to \bN$ with the exceptional divisor $D_0$, $\bsigma$ induces a surjection
$$
\bsigma_{Bl}\ :\E|_{Bl}\ \twoheadrightarrow\ \cO_{Bl}\(D-D_0\).
$$
Let $F:=\ker\(\bsigma_{Bl}\)$. Then the disjoint union $E|_M \cup F$ surjects to the kernel of $\bsigma$,
$$
E|_M\, \cup\, F\ \twoheadrightarrow\ \ker\(\bsigma\)\ \subset E.
$$ 
Since this induces a surjection of Chow groups, the cone reduction property $C_{\bN}\subset\ker\(\bsigma\)$ for the cosection $\bsigma$ \cite[Corollary 4.5]{KL} tells us that there exist cycles 
\begin{align}\label{CN12}
&[C_1]\ \in\ A_{\rk E}\(E|_M\), \ \ [C_2]\ \in\ A_{\rk E}\(F\) \text{ such that} \nonumber\\
&[C_{\bN}]\=[C_1]+[C_2]\,\in\,A_{\rk E}\(E\) \text{ after pushforwards}.
\end{align}
Hence we obtain a localisation
\begin{align*}
&0^!_{E|_M}[C_1] + b_*\((D-D_0)\cap0^!_F[C_2]\)\ \in\ A_{0}\(M\cup D\)\\ \nonumber
&\ \ \Mapsto 0^!_{E}[C_1] + b_*\(0^!_{\cO(D-D_0)}0^!_{F}[C_2]\)\\ \nonumber
&\ \ \ \ \ \ \, = 0^!_{E}[C_1] + b_*\(0^!_{E|_{Bl}}[C_2]\) \stackrel{\eqref{CN12}}{=} 0^!_E[C_{\bN}]\stackrel{\eqref{FulDef}}{=} [\bN]^{\vir}\ \in\ A_0\(\bN\).
\end{align*}
Note that by definition of cosection localisation \cite[Section 2]{KL}, we have
$$
0^!_{E|_M}[C_1]\; -\; b_*\(D_0\;\cap\;0^!_F[C_2]\)\, =\,[N]^{\vir}_{\sigma}.
$$
So it remains to show that
\beq{final1}
b_*\(D\,\cap\,0^!_F[C_2]\)\=[\PP(N)]^{\red}.
\eeq

Since $D$ does not meet $D_0$, we can restrict $F$ and $C_2$ to $Bl\take D_0 \cong N_{D/\bN}$, having
\begin{align}\label{final2}
b_*\(D\,\cap\,0^!_F[C_2]\)&\=D\,\cap\,0^!_{F|_{N_{D/\bN}}}[C_2|_{N_{D/\bN}}]\\
&\=0^!_{F|_D}\(F|_D\,\cap\,[C_2|_{N_{D/\bN}}]\).\nonumber
\end{align}
The restriction $[C_2|_{N_{D/\bN}}]$ is the cycle representing Behrend-Fantechi cone of $N_{D/\bN}$
$$
C_{N_{D/\bN}}\ \subset\ F|_{N_{D/\bN}} \ \subset\ E|_{N_{D/\bN}},
$$
obtained by the representative $\{E|^*_{N_{D/\bN}}\to T|^*_{N_{D/\bN}}\}=\EE_{\bN}|_{N_{D/\bN}}$. 

We claim that the intersection with the infinity divisor $F|_D\subset F|_{N_{D/\bN}}$ is the Behrend-Fantechi cone of $D$, 
\beq{CLAIM}
F|_D\cap C_{N_{D/\bN}} \=C_D\ \subset\ F|_D\ \subset\ E|_D
\eeq
obtained by the representative $\{E^*|_D\to\frac{T^*|_D}{N^*_{D/\bN}}\}=\EE_{\bN}|_D^{\mathrm{fix}}$, as cycles. It is enough to show that $C_D=C_{N_{D/\bN}}|_D$ as Deligne-Mumford stacks. But this is more or less obvious since $N_{D/\bN}$ is a bundle on $D$. Picking any local smooth embedding $D\subset \cU$, we obtain a Cartesian diagram
$$
\xymatrix@R=6mm{
D\,\ar@{^(->}[r]\ar@{^(->}[d]& \cU\ar@{^(->}[d] \\
N_{D/\bN}\ar@{^(->}[r] & \cU\times\C,
}
$$ 
by assuming $N_{D/\bN}$ is a trivial bundle on $D$ locally. Then we have 
\begin{align*}
&C_{D/\cU}= C_{N_{D/\bN}/\cU\times\C}|_D\\
&\Longrightarrow\ \left[\frac{C_{D/\cU}}{T_{\cU}|_D}\right]\times_{\left[\frac{E|_D}{T|_D/N_{D/\bN}}\right]}E|_D=\left[\frac{C_{N_{D/\bN}/\cU\times\C}|_D}{T_{\cU\times\C}|_D}\right]\times_{\left[\frac{E|_D}{T|_D}\right]}E|_D\\
&\Longrightarrow\ C_D=C_{N_{D/\bN}}|_D
\end{align*}
since the last equality is a gluing of the middle equality. So the claim \eqref{CLAIM} is true. By \eqref{final2}, \eqref{CLAIM}, we have \eqref{final1}
$$
b_*\(D\,\cap\,0^!_F[C_2]\)\=0^!_{F|_D}[C_D]\=0^!_{h^1/h^0(\Cone(\bsigma_D))}[\fC_D]\=[\PP(N)]^{\red}.
$$
\end{proof}

\section{Proofs of Theorems \ref{main1} and \ref{thm2}}\label{app}
Theorem \ref{main2} implies Theorem \ref{main1} immediately. However to obtain Theorem \ref{thm2} from Theorem \ref{main2} we need a surjectivity of $h^1(\bsigma)$. We introduce one criterion to check a surjectivity.

\begin{Lemma}\label{HuHuHu}
$h^1(\bsigma)$ is surjective on $D$ if and only if $h^0\(\Cone\eqref{ChainAB}\)=0$.
\end{Lemma}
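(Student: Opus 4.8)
The plan is to reduce the surjectivity of $h^1(\bsigma)$ along $D$ to a pointwise linear-algebra condition on the chain map \eqref{ChainAB}, and to recognise that condition as the vanishing of $h^0\(\Cone\eqref{ChainAB}\)$. First I would reduce from $\bsigma$ to its $\EE^\vee_{M/S}$-piece. By construction the composition $\EE^\vee_{\bN/S}\to\EE^\vee_{\bN}\rt{\bsigma}\cO_{\bN}(D)[-1]\ot\t$ equals $\varsigma\colon\EE^\vee_{\bN/S}\to\EE^\vee_{M/S}|_{\bN}\to\cO_{\bN}(D)[-1]\ot\t$. In the triangle $\LL_S^\vee|_{\bN}[-1]\to\EE^\vee_{\bN/S}\to\EE^\vee_{\bN}$ dual to the one defining $\EE_{\bN}$, we have $h^1\(\LL_S^\vee|_{\bN}\)=0$ because $\LL_S|_{\bN}$ is a vector bundle, so $h^1\(\EE^\vee_{\bN/S}\)\to h^1\(\EE^\vee_{\bN}\)$ is onto; hence $h^1(\bsigma)$ is surjective on $D$ iff $h^1(\varsigma)$ is. In the triangle $\EE^\vee_{\bN/M}\to\EE^\vee_{\bN/S}\to\EE^\vee_{M/S}|_{\bN}$ dual to \eqref{tritritri}, we have $h^2\(\EE^\vee_{\bN/M}\)=0$ since $\EE_{\bN/M}$ has amplitude $[-1,0]$ by Lemma \ref{Lem:Er}, so $h^1\(\EE^\vee_{\bN/S}\)\to h^1\(\EE^\vee_{M/S}\)|_{\bN}$ is onto; therefore $h^1(\varsigma)$ is surjective on $D$ iff the induced cosection $\kappa\colon h^1\(\EE^\vee_{M/S}\)|_{\bN}\to\cO_{\bN}(D)\ot\t$ is surjective on $D$.

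Next I would compute $\kappa|_D$ from the Euler sequence. Since $\EE^\vee_{M/S}[1]=\{A'\to B'\}$ as in \eqref{ChainAB}, we have $h^1\(\EE^\vee_{M/S}\)|_{\bN}=\coker\(A'|_{\bN}\to B'|_{\bN}\)$, and $\kappa$ is the map induced by $B'|_{\bN}\rt{\psi_B}B|_{\bN}\rt{\btau}\cO_{\bN}(D)\ot\t$, where $\psi_B$ is the degree-$0$ component of \eqref{ChainAB} and $\btau\colon B|_{\bcB}\to\cO_{\bcB}(\ccD)\ot\t$ is the dual Euler map of \eqref{EulerbcBM}, restricted along $\bN\subset\bcB$. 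Along $D=\PP(N)\subset\ccD$ the map $\btau|_D$ is the fibrewise dual of the tautological inclusion $\cO_D(-D)\hookrightarrow B^*|_D\ot\t$, hence surjective onto the line bundle $\cO_D(D)\ot\t$; at $\ell\in D$ lying over $m\in M$ its kernel is the annihilator $\ell^\perp\subset B_m$ of the line $\ell\subset N_m=\ker\(d\colon B^*_m\to A^*_m\)$, and $N_m^\perp=\im\(d^*\colon A_m\to B_m\)\subseteq\ell^\perp$. Thus $\kappa|_D$ is surjective iff, for every $\ell\in D$, the subspace $\psi_B(B'_m)\subseteq B_m$ maps onto $B_m/\ell^\perp\cong\ell^\vee$.

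Finally I would match this with $h^0\(\Cone\eqref{ChainAB}\)=0$. A direct cone computation gives $h^0\(\Cone\eqref{ChainAB}\)=\coker\(B'\oplus A\rt{(\psi_B,\,d^*)}B\)$, which vanishes iff $\psi_B(B'_m)+N_m^\perp=B_m$ for all $m$, that is, $\psi_B(B'_m)$ maps onto $B_m/N_m^\perp\cong N_m^*$ for all $m$. This is equivalent to the surjectivity of $\kappa|_D$: one implication is immediate from the surjection $N_m^*\twoheadrightarrow\ell^\vee$ attached to each line $\ell\subset N_m$; conversely, were the image of $\psi_B(B'_m)$ in $N_m^*$ a proper subspace $W$, then for $0\neq n\in W^\perp\subseteq N_m$ and $\ell=\C n$ the map $\psi_B(B'_m)\to\ell^\vee$ would vanish, contradicting surjectivity onto the one-dimensional $\ell^\vee$. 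I expect the main obstacle to be the homological bookkeeping of the first step — verifying that $h^1(\bsigma)$ genuinely factors through the $\EE^\vee_{M/S}$-piece with both transition maps surjective; once that is in place, the Euler-sequence computation of $\kappa|_D$ and the elementary fact that a subspace of $B_m$ surjects onto $N_m^*$ exactly when it surjects onto $\ell^\vee$ for every line $\ell\subset N_m$ finish the proof.
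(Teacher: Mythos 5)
Your argument is correct and follows essentially the same route as the paper: both reduce the surjectivity of $h^1(\bsigma)$ on $D$ to the bundle-level composite $B'|_D\to B|_D\to\cO_D(D)\ot\t$ and then check the equivalence with $h^0\(\Cone\eqref{ChainAB}\)=0$ by pointwise linear algebra on the representatives $A,B,A',B'$. The only differences are presentational: you spell out the homological bookkeeping (the two surjections on $h^1$) that the paper compresses into "by abuse of notation", and you run the final step on the cokernel side via annihilators, whereas the paper dualises once and checks pointwise injectivity of $\ker(B^*\to A^*)\to B'^*$ — the same computation seen from the other side.
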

\begin{proof}
By abuse of notation, we denote by $\bsigma$ the composition
$$
\bsigma\ :\ B'|_{\bN} \ \To\ B|_{\bN}\ \To\ \cO_{\bN}(D).
$$
Then $\bsigma|_D$ is surjective iff 
$$
\bsigma^*|_D\ :\ \cO_D(-D)\ \To\ B^*|_D\ \To\ B'^*|_D
$$ 
is pointwise injective iff $\ker (B^*\to A^*) \subset B^*\to B'^*$ is pointwise injective iff $h^{0}\(\Cone\eqref{ChainAB}^\vee\)=0$ at each point iff $h^0\(\Cone\eqref{ChainAB}\)=0$.
\end{proof}

Lemma \ref{HuHuHu} tells us that it is enough to check if 
$$
h^0\(\EE^\vee_{M/S}[1]\)\ \To\ h^0\(\EE_{N/M}|_M\)
$$ 
is onto for stable quasimaps. This map is described in \cite[Equation (3.7) and Example 1]{KO}. Following this description, the surjectivity is equivalent to the smoothness of $X$. Hence Theorem \ref{thm2} follows from Theorem \ref{main2}. 

\begin{Rmk}
Indeed, Theorem \ref{thm2} holds for any Calabi-Yau $3$-fold, complete intersection in the GIT quotient coming from a gauged linear sigma model studied in \cite{CFGKS}. More precisely letting $X$ be a Calabi-Yau $3$-fold, complete intersection in the GIT quotient $Y$ as the zero of a section of the bundle $V$ on $Y$, we have
\begin{align*}
\{e_{\T}\(R\pi_*f^*V\ot\t^{-1}\) & \cap [Q^{\;\varepsilon}_g(Y,d)]^{\vir}\}_{t=0} - \iota_*[Q^{\;\varepsilon}_g(X,d)]^{\vir} \\
&=(-1)^{\rank R\pi_*f^*V}p_*[\PP(N)]^{\red}\ \in\ A_0\(Q^{\;\varepsilon}_g(Y,d)\),
\end{align*}
if the defining equation of $X\subset Y$ (providing the defining section of $V$) has singularities only on the unstable locus of the GIT quotient $Y$.
\end{Rmk}

\appendix
\section{Five localisations}\label{app:A}
We review the five localised invariants introduced in \cite{JT} for a quasi-projective shceme $N$ with the symmetric obstruction theory $\EE_N\to\LL_N$, acted on by a torus $\T := \C^*$ with a compact fixed locus $N^\T$. 
We assume the POT is equivariant, but the symmetricity need not be preserved by $\T$. 

\subsubsection*{The virtual signed Euler characteristic of Ciocan-Fontanine--Kapranov and Fantechi--G\"ottsche}  The virtual Euler characteristic of $N^\T$ is defined in \cite{CK, FG} by,
$$
\int_{\, [N^\T]^{\vir}}  c\,\(\,\EE_N^\vee|_{N^\T}^{\mathrm{fix}}\)  \ \in \ \Q.
$$
The terminology {\em virtual Euler characteristic} comes from regarding $\EE_N^\vee|_{N^\T}^{\mathrm{fix}}$ to be the virtual tangent bundle of $N^\T$.
In \cite{JT}, using the virtual {\em cotangent} bundle $\EE_N|_{N^\T}^{\mathrm{fix}}$, the {\em virtual signed Euler characteristic}
$$
e_{1'} \ := \ \int_{\, [N^\T]^{\vir}}  c\,\(\,\EE_N|_{N^\T}^{\mathrm{fix}}\)  \ \in \ \Q
$$
is considered.

\subsubsection*{Graber--Pandharipande torus localisation}
The virtual cycle $[N]^{\vir}$ has a lifting in the equivariant Chow group $A^{\T}_{0}\(N\)$. Then one can localise $[N]^{\vir}$ by virtual torus localisation \cite{GP},
$$
[N]^{\vir} \ = \ \iota_*\left(\frac{[N^\T]^{\vir}}{e_{\T}\(\mathsf{N}^{\vir}_{N^\T/N}\)}\right), \ \ \  \iota\ :\ N^\T \ \into\ N.
$$
Here, $\mathsf{N}^{\vir}$ denotes the virtual normal bundle. The fixed (weight zero) part $\EE_N|_{N^\T}^{\mathrm{fix}}$ of the pullback complex $\EE_N|_{N^\T}$ is a perfect obstruction theory of $N^\T$ \cite{GP}. Thus the virtual fundamental class $[N^\T]^{\vir}$ is defined. Using the class 
$$
\frac{[N^\T]^{\vir}}{e_{\T}\(\mathsf{N}^{\vir}_{N^\T/N}\)} \ \in\ A_0\(N^\T\)[t]
$$ 
which is a polynomial in $t$ 
by degree reason,
we define an invariant
$$
e_1 \ :=\ \deg \left( \left\{\frac{[N^\T]^{\vir}}{e_{\T}\(\mathsf{N}^{\vir}_{N^\T/N}\)}\right\}_{t=0}\, \right) \ \in\ \Q.
$$

\subsubsection*{Kiem--Li cosection localisation}
The pairing with the Euler vector field defines a cosection 
$$
\sigma \ : \ \Omega_N \ \To \ \cO_N
$$
on the cotangent sheaf $\Omega_N$ of $N$ which is the obstruction sheaf $h^1\(\EE_N^\vee\) \cong h^0\(\EE_N\) \cong \Omega_N$. Then one can localise $[N]^{\vir}$ via cosection localisation \cite{KL}
$$
[N]^{\vir} \ = \ \iota_*\, [N]^{\vir}_\sigma .
$$
It defines an invariant
$$
e_2 \ :=\ \deg\, [N]^{\vir}_{\sigma} \ \in\ \Z.
$$

\subsubsection*{Behrend localisation} A weighted Euler characteristic weighted by the Behrend function $\nu_N$ \cite[Definition 1.4]{Be} gives rise to an invariant
$$
e_{2'} \ :=\ e\(N^{\T}, \nu_N|_{N^{\T}}\) \ \in\ \Z.
$$

\subsubsection*{The signed Euler characteristic} Considering $N^{\T}$ as a topological space, it produces an invariant
$$
e_{2''} \ :=\ (-1)^{\rank \(\EE_N|_{N^\T}^{\mathrm{fix}}\)} \cdot e\(N^{\T}\) \ \in\ \Z.
$$

\subsection*{Known results}
We list here some known results about the five localised invariants.

\smallskip
{\bf 1.} Kiem-Li and Behrend localised invariants are the same, $e_2=e_{2'}$, by \cite[Theorem 5.20]{Ji}.

\smallskip
{\bf 2.} When the symmetricity of $\EE_N$ is preserved by $\T$, Graber-Pandharipande and Behrend localised invariants are the same, $e_1=e_{2'}$. The following is a brief explanation. In this case $[N^{\T}]^{\vir}$ is of degree zero and
$$
e_{\T}\(\mathsf{N}^{\vir}_{N^\T/N}\) = (-1)^{\rank \(\mathsf{N}^{\vir}_{N^\T/N}\)}, \ \ \deg\,[N^{\T}]^{\vir}  = e\(N^{\T}, \nu_{N^{\T}}\).
$$
The latter comes from \cite[Theorem 4.18]{Be}. So we have 
$$
e_1\ =\ (-1)^{\rank \(\mathsf{N}^{\vir}_{N^\T/N}\)}e\(N^{\T}, \nu_{N^{\T}}\).
$$
Then $e_1=e_{2'}$ comes from \cite[Theorem 2.4]{LQ}. 

\smallskip
{\bf 3.} Combining the above results, we have $e_1=e_2$ when the symmetricity is preserved by $\T$. There is a stronger and more direct proof: by \cite[Theorem 3.5]{CKL}, we obtain an equivalence of cycles
\beq{1=2}
\frac{[N^\T]^{\vir}}{e_{\T}\(\mathsf{N}^{\vir}_{N^\T/N}\)} \ =\ [N]^{\vir}_{\sigma}.
\eeq
This induces $e_1=e_2$ immediately.
Note that \eqref{1=2} does not require $N^{\T}$ to be compact.

\smallskip
{\bf 4.}
For a $(-1)$-shifted cotangent bundle $N$ of a compact quasi-smooth derived scheme, $e_1=e_{1'}$ and $e_2=e_{2'}=e_{2''}$ are proven in \cite[Theorem 1.2]{JT}. However \cite[Examples 3.1 and 3.2]{JT} show that $e_1$ may not be equal to $e_2$ in general.

\vspace{4mm}\noindent
{\tt{j.oh@imperial.ac.uk} } \medskip

\noindent Department of Mathematics, \
Imperial College London \\
London SW7 2AZ, \
United Kingdom


\begin{thebibliography}{99}







\bibitem[Be]{Be} K. Behrend, {\em Donaldson-Thomas type invariants via microlocal geometry}, Ann. Math. \textbf{170} (2009), 1307--1338. \mathAG{0507523}.


\bibitem[BF]{BF} K. Behrend and B. Fantechi, {\em The intrinsic normal cone}, Invent. Math. \textbf{128} (1997), 45--88. \href{http://arxiv.org/abs/alg-geom/9601010}{alg-geom/9601010}.


\bibitem[BL]{BL} J. Bernstein and V. Lunts, {\em Equivariant sheaves and functors}, Springer (2006).































\bibitem[CKL]{CKL} H.-L. Chang, Y.-H. Kiem and J. Li, {\em Torus localization and wall crossing for cosection localized virtual cycles}, Adv. Math. \textbf{308} (2017), 964--986. \arXiv{1502.00078}. 


\bibitem[CL12]{CL12} H.-L. Chang and J. Li, {\em Gromov-Witten invariants of stable maps with fields}, IMRN \textbf{2012} (2012): 4163--4217. \arXiv{1101.0914}.



\bibitem[CL20]{CL20} H-L. Chang and M-l. Li, {\em Invariants of stable quasimaps with fields}, Trans. Am. Math. Soc. \textbf{373} (2020): 3669-3691. \arXiv{1804.05310}.

\bibitem[CJR]{CJR} Q. Chen, F. Janda and Y. Ruan, {\em The logarithmic gauged linear sigma model}, Invent. Math. (2021), 1--78. \arXiv{1906.04345}.

\bibitem[CJW]{CJW} Q. Chen, F. Janda and R. Webb, {\em Virtual cycles of stable (quasi)-maps with fields}, Adv. Math. \textbf{385} (2021), Paper No. 107781, 49 pp. \arXiv{1911.09825}.




\bibitem[CFGKS]{CFGKS} I. Ciocan-Fontanine, D. Favero, J. Gu\'er\'e, B. Kim and M. Shoemaker, \emph{Fundamental Factorization of GLSM Part I: Construction}, to appear in Mem. Am. Math. Soc. \arXiv{1802.05247}.

\bibitem[CK]{CK} I.~Ciocan-Fontanine and M.~Kapranov, \textit{Virtual fundamental classes via dg-manifolds}, Geom. Topol. \textbf{13} (2009), 1779--1804. \mathAG{0703214}.


\bibitem[CKM]{CKM} I.~Ciocan-Fontanine, B.~Kim, and D.~Maulik, \textit{Stable quasimaps to GIT quotients}, J. Geom. Phys. \textbf{75} (2014), 17--47. \arXiv{1106.3724}.










\bibitem[FJR]{FJR} H. Fan, T. Jarvis and Y. Ruan, {\em A mathematical theory of the gauged linear sigma model}, Geom. Topol. \textbf{22} (2018), 235--303. \arXiv{1506.02109}.

\bibitem[FG]{FG} B. Fantechi and L. G\"ottsche, {\em Riemann-Roch theorems and elliptic genus for virtually smooth schemes}, Geom. Topol. {\bf 14} (2010), 83--115. \arXiv{0706.0988}.






%


\bibitem[GP]{GP} T.~Graber and R.~Pandharipande, \textit{Localization of virtual classes}, Invent.~Math.~{\bf{135}} (1999), 487--518. \href{http://arxiv.org/abs/alg-geom/9708001}{alg-geom/9708001}.

\bibitem[GJR]{GJR} S. Guo, F. Janda and Y. Ruan, {\em Structure of Higher Genus Gromov-Witten invariants of Quintic 3-folds}, \href{https://arxiv.org/pdf/1812.11908.pdf}{arXiv:1812.11908}.







\bibitem[Ji]{Ji} Y. Jiang, {\em Note on MacPherson's local Euler obstruction}, Mich. Math. J. \textbf{68} (2019), 227--250. \arXiv{1412.3720}.

\bibitem[JT]{JT} Y. Jiang and R. P. Thomas, {\em Virtual signed Euler characteristics}, Jour. Alg. Geom. \textbf{26} (2017), 379--397. \arXiv{1408.2541}.






\bibitem[KL]{KL} Y.-H. Kiem and J. Li, {\em Localizing virtual cycles by cosections}, JAMS \textbf{26} (2013), 1025--1050. \arXiv{1007.3085}.



%
\bibitem[KKP]{KKP} B. Kim, A. Kresch and T. Pantev, {\em Functoriality in intersection theory and a conjecture of Cox, Katz, and Lee}, J. Pure Appl. Algebra \textbf{179} (2003), 127--136.

\bibitem[KO]{KO} B. Kim and J. Oh, {\em Localized Chern Characters for 2-periodic complexes}, to appear in Selecta Mathematica. \href{http://arxiv.org/abs/1804.03774}{arXiv:1804.03774}.






\bibitem[Kr]{Kr} A. Kresch, {\em Cycle groups for Artin stacks}, Invent. Math. \textbf{138} (1999), 495--536.


\bibitem[LP]{LP} H. Lho and R. Pandharipande, {\em Stable quotients and the holomorphic anomaly
equation}, Adv. Math. \textbf{332} (2018), 349--402. \arXiv{1702.06096}.


\bibitem[LQ]{LQ} W.-P. Li and Z. Qin, {\em Donaldson-Thomas invariants of certain Calabi--Yau 3-folds}, Comm. Analysis and Geom. \textbf{21} (2013), 541--578 \arXiv{1002.4080}.
















\bibitem[Pi]{Pi} R. Picciotto, {\em Moduli of stable maps with fields}, \arXiv{2009.04385}.





















\end{thebibliography}
\end{document}